\documentclass[12pt]{amsart}

\usepackage{geometry}
\usepackage{amsmath}
\usepackage{amssymb}
\usepackage{amsthm}
\usepackage{amsfonts, dsfont}
\usepackage{mathrsfs} 
\usepackage{algorithm}
\usepackage{algorithmic}
\usepackage{paralist}
\usepackage{graphics} 
\usepackage{epsfig} 
\usepackage{graphicx}  
\usepackage{epstopdf}
\usepackage{epstopdf}
\usepackage{verbatim}
\usepackage{mathrsfs}
\usepackage{mathtools}
\usepackage{pstricks}
\usepackage{relsize}
\usepackage{tikz}
\usetikzlibrary{matrix}
\usepackage{subcaption}
\usepackage{pgfplots}
\usepackage{fixltx2e}
\usepackage{enumitem}
\usepackage{bm}
\usepackage{upgreek}
\usepackage{url}
\usepackage[colorlinks=true]{hyperref}
\hypersetup{urlcolor=blue, linkcolor=blue, citecolor=red}
\usepackage{cleveref}
\usepackage{bm}
\usepackage{appendix}

\usepackage{tcolorbox} 

\allowdisplaybreaks

\newcommand{\D}[1]{\mbox{\rm #1}} 
\newcommand{\dd}{\D{d}}

\definecolor{ao(english)}{rgb}{0.0, 0.5, 0.0}

\DeclareMathOperator*{\argmin}{argmin}

\DeclareMathOperator*{\essinf}{ess-inf}

\usepackage[abbrev]{amsrefs}

\usepackage{amssymb}

\numberwithin{equation}{section}

\newtheorem{theorem}{Theorem}[section]
\newtheorem{lemma}[theorem]{Lemma}
\newtheorem{assumption}[theorem]{Assumption}
\newtheorem{corollary}[theorem]{Corollary}

\newtheorem{remark}[theorem]{Remark}
\newtheorem{definition}[theorem]{Definition}

\definecolor{ForestGreen}{RGB}{34,139,34}
\definecolor{ao(english)}{rgb}{0.0, 0.5, 0.0}

\begin{document}

\title[Discrete Consensus-Based Optimization in Hilbert Spaces]
{Convergence of Time-Discrete Finite-Particle Consensus-Based Optimization in Hilbert Spaces}


\author{Michael Herty}
\address{Michael Herty \newline
Institut f\"ur Geometrie und Praktische Mathematik, RWTH Aachen University,  \newline
52062, Aachen, Germany. \newline
Extraordinary Professor, Department of Mathematics and Applied Mathematics, University of Pretoria, 
0028, South Africa}
\email{\texttt{herty@igpm.rwth-aachen.de}}

\author{Hui Huang}
\address{Hui Huang \newline
School of Mathematics, 
Hunan University, \newline
410082, Changsha, China
}
\email{\texttt{huihuang1@hnu.edu.cn}}

\author{Hicham Kouhkouh}
\address{Hicham Kouhkouh \newline 
Department of Mathematics and Scientific Computing, NAWI, University of Graz,\newline 
8010, Graz, Austria
}
\email{\texttt{hicham.kouhkouh@uni-graz.at}}





\date{\today}

\begin{abstract}
We study a time-discrete, finite-particle Consensus-Based Optimization (CBO) algorithm in a separable Hilbert space. Our analysis provides convergence guarantees directly for the computable particle system, complementing recent continuous-time and mean-field results in infinite dimensions. Using a common-noise formulation with trace-class covariance, we first establish quantitative pairwise contraction, exponential decay of the expected swarm variance, and almost-sure convergence of all particles to a common consensus state. We then combine estimates on the exponentiated objective functional with a quantitative Laplace principle to show that, for sufficiently large inverse temperature and suitably prepared initial data, the energy of the limiting consensus state can be made arbitrarily close to the global minimum over the active subspace. A key feature of the Hilbert-space formulation is that the stochastic contribution to the convergence estimates is controlled by the trace of the covariance operator and is therefore uniform with respect to the Galerkin dimension. Numerical experiments on an elliptic energy minimization problem with mixed boundary conditions and a PDE-constrained inverse source problem validate the theoretical convergence results and demonstrate stable performance under increasing spatial resolution, in contrast with CBO based on isotropic finite-dimensional noise.
\end{abstract}

\subjclass[MSC]{65C35, 60H10, 90C56, 90C26}








\keywords{
Consensus-Based Optimization,
Infinite-dimensional optimization,
Time-discrete particle systems,
Finite-particle convergence,
Hilbert spaces,
PDE-constrained optimization}

\maketitle


\section{Introduction}

In recent years, stochastic particle systems have emerged as a powerful tool for tackling large-scale and non-convex optimization problems arising in machine learning, physics, and applied mathematics. A central class of such methods is based on consensus-driven interaction mechanisms, which lead to collective dynamics. Therein, an ensemble of agents progressively concentrates around favorable regions of the energy landscape. Among these approaches, Consensus-Based Optimization (CBO) \cite{pinnau2017consensus,carrillo2018analytical} has become a prominent representative, motivated by collective behavior in biological and social systems. CBO employs interacting particle systems to explore the optimization landscape and has been extensively studied and extended in a variety of directions \cite{ha2022stochastic,borghi2022consensus,carrillo2022consensus,carrillo2021consensus,fornasier2022anisotropic,fornasier2020consensus,fornasier2024truncated,byeon2025consensus,wei2025consensus,cipriani2022zero,huang2024fast,huang2023global,roith2025consensus}. Its derivative-free nature has also made it particularly amenable to mathematical analysis, leading to a rapidly growing body of theoretical work on its convergence properties and mean-field behavior \cite{fornasier2024consensus,huang2025faithful,huang2025uniform,choi2025modified}.

Consequently, CBO has been developed and adapted to a wide array of complex optimization settings. Notable extensions include multiple-minimizer problems \cite{bungert2025polarized,fornasier2025pde,huang2025faithful}, saddle point problems \cite{huang2024consensus,borghi2024particle}, multiplayer games \cite{chenchene2025consensus}, neural networks \cite{de2025mean}, stochastic optimization \cite{bellavia2025discrete,bonandin2025consensus}, multi-level \cite{herty2025multiscale,garcia2025defending} and multi-objective optimization \cite{borghi2023adaptive,kang2026time}, clustered federated learning \cite{carrillo2023fedcbo}, as well as constrained optimization \cite{bungert2025mirrorcbo,beddrich2026constrained,borghi2023constrained}. Further developments encompass jump diffusions \cite{kalise2023consensus,aceves2026consensus}, momentum acceleration \cite{chen2022consensus}, discrete-time formulations \cite{ha2020convergence,ha2021convergence,ko2022convergence}, and memory-based or self-interacting dynamics \cite{riedl2024leveraging,huang2024self,totzeck2020consensus}. Beyond the classical Euclidean setting, consensus-driven optimization mechanisms have also recently been extended to non-Euclidean geometries, including Riemannian manifolds \cite{huang2026collective}. Rather than providing an exhaustive account of this rapidly expanding field, we refer to the survey by Totzeck \cite{totzeck2021trends} and the more recent comprehensive review \cite{fornasier2026consensus} for a broader perspective.

From the theoretical perspective, considerable effort has been devoted to understanding the relation between the finite-particle system and its mean-field description, as well as the long-time behavior of the resulting dynamics. Recent results include uniform-in-time mean-field estimates for CBO \cite{huang2025uniform,bayraktar2026uniform,gerber2025uniform}, long-time convergence for self-interacting CBO dynamics \cite{huang2024self} and for constant-diffusion CBO dynamics \cite{bianchi2025consensus}, and simultaneous investigations of consensus formation and uniform-in-time propagation of chaos for modified CBO models \cite{choi2025modified}. At the level of global optimization guarantees, increasingly refined convergence theories have also been developed, including faithful global convergence results designed to overcome limitations of classical CBO formulations \cite{huang2025faithful}. These advances provide a sharper understanding of the interplay between particle approximation, mean-field behavior, consensus formation, and global optimization.

\paragraph{\textbf{Time-discrete CBO}}
Discrete-time formulations of CBO are particularly relevant from the computational perspective, since they correspond directly to the algorithms implemented in practice. An important analytical foundation in this direction was established by Ha, Jin, and Kim. In \cite{ha2020convergence}, the authors study the convergence of a first-order consensus-based global optimization algorithm, while \cite{ha2021convergence} develops convergence and error estimates for time-discrete CBO schemes. These works provide a direct finite-particle analysis of the discrete dynamics and establish a useful framework for separating the formation of consensus from the characterization of the limiting consensus state. More recent developments have addressed discrete CBO in realistic algorithmic settings, including noisy objective evaluations \cite{bellavia2025discrete} and strong mean-square convergence of practical time-discrete schemes with quantitative dependence on the time step and the number of particles \cite{bonandin2025strong}. These results establish a convergence theory for implementable CBO algorithms in finite-dimensional Euclidean spaces.

\paragraph{\textbf{CBO in Hilbert spaces.}}
While the finite-dimensional setting has been extensively studied, many optimization problems arising in modern applications are inherently infinite-dimensional. Representative examples include variational problems, optimal control, inverse problems, and optimization constrained by partial differential equations. Such problems are naturally formulated over function spaces, and in particular over separable Hilbert spaces. Extending particle-based optimization methods such as CBO to these settings is therefore both natural and necessary, but introduces fundamental analytical and computational difficulties associated with the infinite-dimensional geometry and with the construction of admissible stochastic perturbations.

Recent works have approached infinite-dimensional optimization with CBO from several complementary perspectives. For instance, \cite{khatab2026consensus} employs Gaussian process representations within a CBO framework for optimization in Sobolev spaces, whereas \cite{borghi2026variational} formulates CBO on the manifold of Gaussian probability measures endowed with a Bures--Wasserstein geometry for variational inference. CBO has also been applied to problems whose original formulation is infinite-dimensional while the numerical optimization itself is performed over a finite-dimensional parametrization, including stochastic control problems \cite{lyu2025consensus} and model predictive control formulations \cite{borghi2025model}.

More recently, in \cite{huang2026derivative}, CBO was formulated directly in a separable Hilbert space at the level of continuous-time stochastic dynamics and their mean-field McKean--Vlasov description. A central difficulty in passing from Euclidean to Hilbert spaces concerns the construction of the stochastic exploration mechanism. While isotropic Brownian forcing is natural in finite dimensions, its direct infinite-dimensional analogue is generally not Hilbert-space valued. The formulation in \cite{huang2026derivative} therefore relies on trace-class covariance operators and $Q$-Wiener processes, together with an active subspace structure identifying the directions on which stochastic exploration acts. Within this framework, well-posedness and quantitative concentration properties of the continuous-time and mean-field dynamics were established.

The continuous-time and mean-field theory, however, does not by itself provide convergence guarantees for the numerical CBO algorithm. In practice, one evolves a finite ensemble of particles with a nonzero time step and, for function-space optimization, represents the particles in a finite-dimensional approximation space. This raises a distinct question: whether the consensus and global optimization mechanisms persist simultaneously under time discretization, a finite number of particles, and refinement of the underlying spatial discretization. In particular, an infinite-dimensional algorithmic formulation should yield estimates that remain controlled as the dimension of the active approximation space increases. Establishing convergence directly for the spatially truncated, time-discrete particle system is therefore essential for connecting the Hilbert-space theory of CBO with its numerical implementation.

\paragraph{\textbf{The present work.}}
In this manuscript, we address this problem by establishing a convergence theory for a time-discrete, finite-particle CBO algorithm in a separable Hilbert space $H$ without resorting to the mean field approximation. Building on the discrete finite-particle strategy developed in \cite{ha2020convergence,ha2021convergence} and the infinite-dimensional stochastic framework of \cite{huang2026derivative}, we consider the optimization problem
\begin{equation*}
    \text{Find } x_V^* \in \argmin_{x\in V}\mathscr{E}(x),
\end{equation*}
where $V\subset H$ denotes a finite-dimensional active subspace and $\mathscr{E}:H\to\mathbb{R}$ is a possibly non-convex objective functional. Our analysis proceeds in two main steps. We first establish quantitative contraction of the finite-particle system and prove the emergence of global consensus. We then investigate the optimization quality of the limiting consensus state and show, through a quantitative Laplace principle, that its energy can be made arbitrarily close to the global minimum of $\mathscr{E}$ over the active subspace.

\paragraph{\textbf{Convergence analysis.}}
More precisely, we formulate the computable dynamics on an $M$-dimensional active subspace $V=\operatorname{span}\{\phi_1,\ldots,\phi_M\}\subset H$ and drive all particles at a given time step by the same realization of the truncated trace-class noise
\begin{equation*}
    Z_n^Q := \sum_{k=1}^M \sqrt{\beta_k}\,\,\zeta_{n,k}\phi_k,
\end{equation*}
where $(\beta_k)_{k\geq 1}$ are the eigenvalues of a trace-class covariance operator $Q$ and $\{\zeta_{n,k}\}_{k=1}^M$ are independent standard normal random variables. The use of a common noise realization is fundamental to the pairwise analysis. Indeed, subtracting the dynamics of two particles $i$ and $j$ gives
\begin{equation*}
\begin{aligned}
    X_{n+1}^i-X_{n+1}^j ={}&(1-\lambda h)(X_n^i-X_n^j) +\sigma\sqrt{h}\left(\|X_n^i-\mathfrak{m}_n^{\alpha,N}\|_H-\|X_n^j-\mathfrak{m}_n^{\alpha,N}\|_H\right)Z_n^Q,
\end{aligned}
\end{equation*}
where $h = \Delta t > 0$ denotes the constant time step, $\mathfrak{m}_n^{\alpha,N}$ is the empirical consensus point at step $n$, $N$ is the number of particles, and $\lambda,\sigma>0$ are fixed constants. 
Hence, although the stochastic contribution does not disappear entirely from the difference dynamics, the same random direction acts on every particle, while the dependence on the empirical consensus point can be controlled through the reverse triangle inequality
\begin{equation*}
    \left|\|X_n^i-\mathfrak{m}_n^{\alpha,N}\|_H-\|X_n^j-\mathfrak{m}_n^{\alpha,N}\|_H\right| \leq \|X_n^i-X_n^j\|_H.
\end{equation*}
Together with the conditional mean-zero property of the noise and the identity
\begin{equation*}
    \mathbb{E}\left[\|Z_n^Q\|_H^2\right]=C_{Q,M}:=\sum_{k=1}^M\beta_k,
\end{equation*}
this provides the basic contraction mechanism underlying the emergence of consensus.

In particular, under a suitable relation between the algorithmic parameters, we prove exponential decay of the expected swarm variance
\begin{equation*}
    V_n:=\frac{1}{N}\sum_{i=1}^N\mathbb{E}\left[\|X_n^i-\mathfrak{m}_n^{\alpha,N}\|_H^2\right].
\end{equation*}
More precisely, we obtain, for some constant $m>0$,
\begin{equation*}
    V_n\leq N(1-hm)^nD_{\mathrm{pair}},
    \qquad \text{where} \qquad
    D_{\mathrm{pair}}:=\max_{1\leq i,j\leq N}\mathbb{E}\left[\|X_0^i-X_0^j\|_H^2\right].
\end{equation*}
We further establish the existence of a common random state $X_\infty\in V$ such that, for every $i=1,\ldots,N$,
\begin{equation*}
    X_n^i\longrightarrow X_\infty \qquad \text{almost surely as } n\to\infty.
\end{equation*}
Thus, the discrete finite-particle system exhibits global consensus, with an exponentially collapsing swarm variance.

Consensus alone, however, does not guarantee that the limiting state is located near a global minimizer. The second part of our analysis therefore concerns the optimization quality of $X_\infty$. We study the evolution of the expected exponentiated energy of the swarm and combine the resulting estimates with a quantitative Laplace principle. Assuming suitable regularity of the objective functional and a well-prepared initial distribution assigning positive probability to arbitrary energy neighborhoods of $x_V^*$, we show that, for every prescribed accuracy $\eta>0$ and any fixed $\varepsilon\in(0,1)$, the inverse temperature $\alpha$ can be chosen sufficiently large such that
\begin{equation*}
    -\frac{\log\varepsilon}{\alpha}-\frac{1}{\alpha}\log\left(\mathbb{E}\left[e^{-\alpha(\mathscr{E}(X_{in})-\mathscr{E}_{\min})}\right]\right)<\eta.
\end{equation*}
Provided that the initial swarm satisfies the corresponding well-preparedness condition at this value of $\alpha$, the limiting consensus state then fulfills
\begin{equation*}
    0\leq \essinf\limits_{\omega\in\Omega}\mathscr{E}(X_\infty)-\mathscr{E}_{\min}<\eta.
\end{equation*}
The analysis therefore separates two fundamental mechanisms underlying the algorithm: the exponential contraction of the particle system toward a common consensus state and the selection, through Gibbs reweighting and stochastic exploration, of a consensus state whose energy can be made arbitrarily close to the global minimum.

\paragraph{\textbf{Dimension-robust exploration.}}
A further feature of the Hilbert-space formulation is that the same stochastic structure responsible for well-defined infinite-dimensional exploration naturally yields stability with respect to the Galerkin dimension. Indeed, the truncated covariance satisfies
\begin{equation*}
    0<C_{Q,M}=\sum_{k=1}^M\beta_k\leq \operatorname{Trace}(Q)=\sum_{k=1}^{\infty}\beta_k<\infty,
\end{equation*}
uniformly in $M$. This is in sharp contrast with standard isotropic exploration in an $M$-dimensional Euclidean approximation, for which the total stochastic variance grows proportionally to the ambient dimension. Consequently, increasing the Galerkin dimension in order to resolve an underlying PDE or variational problem more accurately may simultaneously degrade the behavior of an isotropic CBO algorithm by injecting increasing amounts of stochastic energy into high-frequency directions. Trace-class exploration instead distributes a finite amount of stochastic energy across the modes of the Hilbert space, with decreasing excitation of higher-frequency components. Thus, a structural requirement originating from infinite-dimensional stochastic analysis leads naturally to a dimension-robust exploration mechanism at the computational level. This perspective is complementary to recent developments emphasizing anisotropic and structure-aware diffusion as a means of mitigating ambient-dimensional effects in high-dimensional CBO \cite{bonandin2026exploiting}.

\subsection{Main Contributions}

The main contributions of this work can be summarized as follows.
\begin{enumerate}
    \item \textbf{Discrete finite-particle consensus with dimension-uniform control.} We extend the discrete finite-particle convergence strategy of \cite{ha2020convergence,ha2021convergence} to the Hilbert-space setting. By exploiting the common-noise structure of the particle dynamics, we derive quantitative pairwise contraction estimates whose stochastic contribution is controlled by $C_{Q,M}$. Since $C_{Q,M}\leq\operatorname{Trace}(Q)$, the resulting stability condition can be controlled independently of the Galerkin dimension $M$.

    \item \textbf{Exponential collapse of the swarm and almost-sure consensus.} We prove exponential decay of the expected swarm variance and establish the existence of a common random state $X_\infty\in V$ such that all particles converge to $X_\infty$ almost surely. This provides a convergence result directly for the finite-particle, time-discrete dynamics used in computation, rather than for an associated continuous-time or mean-field limit.

    \item \textbf{Global optimization guarantee via a quantitative Laplace principle.} We complement the consensus analysis by controlling the expected exponentiated energy of the swarm. Under suitable assumptions on $\mathscr{E}$ and a well-prepared initial distribution, a quantitative Laplace principle shows that the energy of the limiting consensus state can be made arbitrarily close to the active-subspace global minimum by choosing the inverse temperature $\alpha$ sufficiently large and the initial swarm sufficiently concentrated in the sense required by the analysis.

    \item \textbf{Dimension-robust exploration and numerical validation.} The trace-class covariance structure yields a uniform bound on the total stochastic variance as the active dimension increases, in contrast with isotropic finite-dimensional noise. We illustrate the performance of the proposed Hilbert-space CBO scheme through two infinite-dimensional optimization problems: an elliptic energy minimization problem with mixed boundary conditions and a PDE-constrained inverse source problem. The numerical experiments confirm the exponential collapse of the swarm and convergence of the objective values. Moreover, comparisons across increasing Galerkin dimensions demonstrate the dimension-robust behavior of the Hilbert-space CBO scheme and highlight the deterioration of its isotropic finite-dimensional counterpart as the spatial resolution increases.
\end{enumerate}

These results provide a direct link between the continuous-time and mean-field Hilbert-space framework of \cite{huang2026derivative} and the finite-particle algorithms used in practice. In particular, they show that the trace-class stochastic structure required by the infinite-dimensional formulation is not only analytically admissible, but also leads to a natural mechanism for controlling the discrete optimization dynamics under refinement of the spatial approximation.


\subsection{Organization of the paper}
The remainder of the manuscript is organized as follows. \textbf{Section \ref{sec:discrete_system}} introduces the time-discrete finite-particle CBO system and the underlying Hilbert-space framework. \textbf{Section \ref{sec:consensus}} establishes the emergence of global consensus, including the exponential decay of the swarm variance and the almost-sure convergence of all particles to a common limiting state. \textbf{Section \ref{sec:global_convergence}} investigates the optimization quality of this consensus state and establishes convergence toward the global minimum over the active subspace using a quantitative Laplace principle. Finally, \textbf{Section \ref{sec:numerics}} presents numerical illustrations for an elliptic energy minimization problem with mixed boundary conditions and a PDE-constrained inverse source problem, with particular emphasis on convergence behavior and robustness with respect to the Galerkin dimension.

\section{The Time-Discrete Particle System}\label{sec:discrete_system}

Let $h = \Delta t > 0$ denote the constant time step, and let $X_n^i \in V \subset H$ denote the state of the $i$-th particle at time $t = nh$, and where $V$ is a finite-dimensional subspace. To ensure our convergence guarantees directly apply to the computable algorithm, we formulate our analysis on the spatially-truncated (Galerkin) discrete-time CBO scheme. This scheme is driven by a \textbf{common} set of simulated standard normal increments.

\begin{definition}[Discrete-Time CBO Scheme]\label{def}
For $i = 1, \dots, N$, the particle update is given by the recursive relation
\begin{equation}\label{eq:dynamics recursive}
    X_{n+1}^i = X_n^i - \lambda h (X_n^i - \mathfrak{m}_n^{\alpha, N}) + \sigma \sqrt{h} \|X_n^i - \mathfrak{m}_n^{\alpha, N}\|_H \left( \sum_{k=1}^M \sqrt{\beta_k}\, \zeta_{n,k} \phi_k \right)
\end{equation}
where
\begin{itemize}
    \item $\mathfrak{m}_n^{\alpha, N}$ is the empirical consensus point at step $n$, defined as:
    \begin{equation}
        \mathfrak{m}_n^{\alpha, N} = \sum_{j=1}^N \omega_n^j X_n^j, \quad \text{with } \omega_n^j = \frac{\exp(-\alpha \mathscr{E}(X_n^j))}{\sum_{\ell=1}^N \exp(-\alpha \mathscr{E}(X_n^\ell))}
    \end{equation}
    \item $\{\phi_k\}_{k=1}^M$ forms an orthonormal basis for the $M$-dimensional active subspace $V$. Because the stochastic sum is constructed exclusively from this basis, it resides in $V$, superseding the need for an explicit projection operator $\mathsf{P}_V$.
    \item $\beta_k > 0$ are the  eigenvalues of the trace-class covariance operator $Q$.
    \item $\{\zeta_{n,k}\}_{k=1}^M$ is a set of independent standard normal random variables, i.e., $\zeta_{n,k} \sim \mathcal{N}(0,1)$, shared across all $N$ particles at step $n$.
    \item $\mathcal{F}_n$ is the discrete-time filtration up to step $n$. Given the independence and unit variance of the normal draws, the stochastic term is mean-zero given $\mathcal{F}_n$, and its expected squared norm evaluates proportionally to the truncated trace constant $C_{Q,M} := \sum_{k=1}^M \beta_k$. Moreover $0< C_{Q,M}\leq \mathrm{Trace}(Q)=\sum_{k=1}^\infty \beta_k<\infty$.
\end{itemize}
\end{definition}


\begin{remark}[The Noise Increment]\label{rmk: noise}
The stochastic term 
\[
\mathcal{N}_n^i := \sigma \sqrt{h} \|X_n^i - \mathfrak{m}_n^{\alpha, N}\|_H \left( \sum_{k=1}^M \sqrt{\beta_k}\, \zeta_{n,k} \phi_k \right)
\]
satisfies the following properties.
\begin{enumerate}
    \item \textbf{Conditional Mean-Zero.} Since $X_n^i$ and $\mathfrak{m}_n^{\alpha, N}$ are $\mathcal{F}_n$-measurable, and the normal draws are independent of $\mathcal{F}_n$ with $\mathbb{E}[\zeta_{n,k}] = 0$, we have
    \begin{equation*}
        \mathbb{E}[\mathcal{N}_n^i \mid \mathcal{F}_n] = \sigma \sqrt{h} \|X_n^i - \mathfrak{m}_n^{\alpha, N}\|_H \sum_{k=1}^M \sqrt{\beta_k}\, \mathbb{E}[\zeta_{n,k}] \phi_k = 0.
    \end{equation*}
    \item \textbf{Conditional Variance.} using the orthonormality of the basis $\{\phi_k\}$ in $H$ and the variance property $\mathbb{E}[\zeta_{n,k}^2] = 1$, one gets
    \begin{align*}
        \mathbb{E}[\|\mathcal{N}_n^i\|_H^2 \mid \mathcal{F}_n] &= \sigma^2 h \|X_n^i - \mathfrak{m}_n^{\alpha, N}\|_H^2 \mathbb{E} \left[ \left\| \sum_{k=1}^M \sqrt{\beta_k}\, \zeta_{n,k} \phi_k \right\|_H^2 \right]  \\
        &= \sigma^2 h \|X_n^i - \mathfrak{m}_n^{\alpha, N}\|_H^2 \sum_{k=1}^M \beta_k \mathbb{E}[\zeta_{n,k}^2] 
        = \sigma^2 h C_{Q,M} \|X_n^i - \mathfrak{m}_n^{\alpha, N}\|_H^2,
    \end{align*}
    where $C_{Q,M} := \sum_{k=1}^M \beta_k$ is the truncated trace constant.
\end{enumerate}
\end{remark}

\section{Emergence of Global Consensus}\label{sec:consensus}

To establish that the particle swarm collapses to a single consensus point, we analyze the evolution of the squared Hilbert distance between any two arbitrary particles $i$ and $j$. By driving the system with a common noise realization at each time step, the stochastic perturbations acting on the particle positions cancel out in the difference, allowing us to derive sharp exponential decay guarantees.

\begin{theorem}[
Consensus, Part I]\label{thm:strong_consensus}
Let $C_{Q,M} := \sum_{k=1}^M \beta_k$ denote the trace of the truncated covariance operator. Suppose the system parameters satisfy $2\lambda > \sigma^2 C_{Q,M}$ and the time step $h$ is chosen such that 
\[
0 < h < \frac{2\lambda - \sigma^2 C_{Q,M}}{\lambda^2} 
\]
Then, for any pair of particles $i, j \in \{1, \dots, N\}$, the expected relative distance satisfies 
\begin{equation*}
    \mathbb{E}[\,\|X_n^i - X_n^j\|_H^2\,] \;\le\; (1 - hm)^n\, \mathbb{E}[\,\|X_0^i - X_0^j\|_H^2\,]
\end{equation*}
where $m := 2\lambda - \lambda^2 h - \sigma^2 C_{Q,M} > 0$. Consequently, for $h$ small such that $0<hm<1$, this difference decays exponentially, and
the particles achieve consensus in the mean-square sense $\lim_{n \to \infty} \mathbb{E}[\|X_n^i - X_n^j\|_H^2] = 0$ for all $i,j$.
\end{theorem}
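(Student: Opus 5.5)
We work with the pairwise difference $D_n := X_n^i - X_n^j$. Subtracting the two instances of \eqref{eq:dynamics recursive}, the common consensus point $\mathfrak{m}_n^{\alpha,N}$ cancels in the drift and the shared noise realization $Z_n^Q := \sum_{k=1}^M \sqrt{\beta_k}\,\zeta_{n,k}\phi_k$ factors out. This gives
\begin{equation*}
D_{n+1} = (1-\lambda h) D_n + \sigma\sqrt{h}\, \delta_n\, Z_n^Q, \qquad \delta_n := \|X_n^i-\mathfrak{m}_n^{\alpha,N}\|_H - \|X_n^j-\mathfrak{m}_n^{\alpha,N}\|_H .
\end{equation*}
Both $D_n$ and $\delta_n$ are $\mathcal{F}_n$-measurable real or $H$-valued quantities. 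By the reverse triangle inequality, $|\delta_n| \le \|D_n\|_H$.

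Next we expand the squared norm and take conditional expectations given $\mathcal{F}_n$:
\begin{equation*}
\|D_{n+1}\|_H^2 = (1-\lambda h)^2\|D_n\|_H^2 + 2(1-\lambda h)\sigma\sqrt{h}\,\delta_n \langle D_n, Z_n^Q\rangle_H + \sigma^2 h\,\delta_n^2 \|Z_n^Q\|_H^2 .
\end{equation*}
The cross term vanishes in conditional expectation, by the conditional mean-zero property of Remark~\ref{rmk: noise}, since $Z_n^Q$ is independent of $\mathcal{F}_n$ with mean zero. The last term has conditional expectation $\sigma^2 h\,\delta_n^2\, C_{Q,M}$, by the conditional variance computation of the same remark. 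Using $\delta_n^2 \le \|D_n\|_H^2$, we obtain
\begin{equation*}
\mathbb{E}\big[\|D_{n+1}\|_H^2 \mid \mathcal{F}_n\big] \le \big(1 - 2\lambda h + \lambda^2 h^2 + \sigma^2 h C_{Q,M}\big)\|D_n\|_H^2 = (1-hm)\|D_n\|_H^2 .
\end{equation*}
Taking full expectations and iterating over $n$ then yields the claimed bound $\mathbb{E}[\|D_n\|_H^2] \le (1-hm)^n\,\mathbb{E}[\|D_0\|_H^2]$.

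It remains to check the sign conditions. The hypothesis $h < (2\lambda - \sigma^2 C_{Q,M})/\lambda^2$ is exactly $m>0$, so the factor $1-hm$ is strictly less than $1$. The factor is also nonnegative, since it is $(1-\lambda h)^2 + \sigma^2 h C_{Q,M} \ge 0$. Consequently, whenever $0<hm<1$ (the nonnegativity in fact shows $hm\le 1$ automatically), the factor lies in $[0,1)$, $(1-hm)^n \to 0$, and mean-square consensus follows. If the initial second moments are infinite the statement is trivial, so we may assume they are finite; integrability at each later step then follows inductively from the same recursion.

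The main point needing care is justifying the vanishing of the cross term. This requires integrability of $\delta_n\langle D_n, Z_n^Q\rangle_H$, which follows from $|\delta_n|\le\|D_n\|_H$, the finite second moment of $D_n$, and the independence of $Z_n^Q$ from $\mathcal{F}_n$. Apart from this, the argument is a direct one-step computation.
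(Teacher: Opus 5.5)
Your proof is correct and matches the paper's argument step for step. Subtract the two updates so the shared noise $Z_n^Q$ factors out, drop the cross term by the conditional mean-zero property, evaluate the noise term as $\sigma^2 h\,C_{Q,M}\,\delta_n^2$ via orthonormality, bound $\delta_n^2\le\|X_n^i-X_n^j\|_H^2$ by the reverse triangle inequality, and iterate with the tower property; your added remarks on integrability of the cross term and on $1-hm=(1-\lambda h)^2+\sigma^2 h\,C_{Q,M}\ge 0$ (so $hm\le 1$ automatically) are valid refinements that the paper leaves implicit.
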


\begin{remark}\label{rmk:independent of M}
Recalling $0< C_{Q,M}\leq \mathrm{Trace}(Q)=\sum_{k=1}^\infty \beta_k<\infty$, the condition on $h$ in Theorem \ref{thm:strong_consensus} can be made independent from the dimension $M$ of the active subspace, such that
\[
0< h < \frac{2\lambda - \sigma^2 \mathrm{Trace}(Q)}{\lambda^2}
\qquad \text{ and } \qquad 
m = 2\lambda - \lambda^2 h - \sigma^2 \mathrm{Trace}(Q) > 0.
\]
\end{remark}

\begin{proof}
Let $X_n^{i,j} := X_n^i - X_n^j$. Subtracting the equations for particles $i$ and $j$ yields
\begin{equation*}
    X_{n+1}^{i,j} = (1 - \lambda h)X_n^{i,j} + \sigma \sqrt{h} \left( \|X_n^i - \mathfrak{m}_n^{\alpha, N}\|_H - \|X_n^j - \mathfrak{m}_n^{\alpha, N}\|_H \right) \left( \sum_{k=1}^M \sqrt{\beta_k}\, \zeta_{n,k} \phi_k \right).
\end{equation*}
Taking the squared Hilbert norm on both sides and conditioning on the filtration $\mathcal{F}_n$, the cross-term between the deterministic part and the stochastic noise vanishes since $\mathbb{E}[\zeta_{n,k} \mid \mathcal{F}_n] = 0$ and the other terms are $\mathcal{F}_n$-measurable. Thus one obtains
\begin{equation}\label{eq:consensus_expansion}
\begin{aligned}
    & \mathbb{E}[\|X_{n+1}^{i,j}\|_H^2 \mid \mathcal{F}_n] = (1 - \lambda h)^2 \|X_n^{i,j}\|_H^2 \\
    &\quad\qquad + \sigma^2 h \left( \|X_n^i - \mathfrak{m}_n^{\alpha, N}\|_H - \|X_n^j - \mathfrak{m}_n^{\alpha, N}\|_H \right)^2 \mathbb{E}\left[ \big\| \sum_{k=1}^M \sqrt{\beta_k}\, \zeta_{n,k} \phi_k \big\|_H^2 \,\middle|\, \mathcal{F}_n \right]. 
\end{aligned}
\end{equation}

Using the orthonormality of the basis $\{\phi_k\}_{k=1}^M$, the squared norm of the stochastic sum simplifies via Parseval's identity such that
\begin{equation*}
    \mathbb{E}\left[ \big\| \sum_{k=1}^M \sqrt{\beta_k}\, \zeta_{n,k} \phi_k \big\|_H^2 \right] = \sum_{k=1}^M \beta_k \mathbb{E}[\zeta_{n,k}^2] = \sum_{k=1}^M \beta_k = C_{Q,M}.
\end{equation*}

Furthermore, by the reverse triangle inequality, the difference of the norms is bounded
\begin{equation*}
    \left| \|X_n^i - \mathfrak{m}_n^{\alpha, N}\|_H - \|X_n^j - \mathfrak{m}_n^{\alpha, N}\|_H \right|^2 \le \| (X_n^i - \mathfrak{m}_n^{\alpha, N}) - (X_n^j - \mathfrak{m}_n^{\alpha, N}) \|_H^2 = \|X_n^{i,j}\|_H^2.
\end{equation*}

Substituting these into \eqref{eq:consensus_expansion}, we obtain
\begin{align*}
    \mathbb{E}[\|X_{n+1}^{i,j}\|_H^2 \mid \mathcal{F}_n] &\le (1 - \lambda h)^2 \|X_n^{i,j}\|_H^2 + \sigma^2 h C_{Q,M} \|X_n^{i,j}\|_H^2 \\
    &= \left( 1 - 2\lambda h + \lambda^2 h^2 + \sigma^2 h C_{Q,M} \right) \|X_n^{i,j}\|_H^2 \\
    &= \left( 1 - h(2\lambda - \lambda^2 h - \sigma^2 C_{Q,M}) \right) \|X_n^{i,j}\|_H^2 \,
    = (1 - hm) \|X_n^{i,j}\|_H^2.
\end{align*}

Given the parameter conditions $2\lambda > \sigma^2 C_{Q,M}$ and $h < \frac{2\lambda - \sigma^2 C_{Q,M}}{\lambda^2}$, we have $m > 0$.
Applying the law of total expectation and iterating over $n$ steps yields
\begin{equation*}
    \mathbb{E}[\|X_n^{i,j}\|_H^2] \le (1 - hm)^n \mathbb{E}[\|X_0^{i,j}\|_H^2].
\end{equation*}
When $0<hm<1$, i.e. $0 < 1 - hm < 1$, the expected squared distance vanishes exponentially as $n \to \infty$.
\end{proof}

\begin{corollary}
[Variance Decay]
\label{cor:variance_decay}
In the situation of Theorem \ref{thm:strong_consensus}, the expected  variance of the swarm $V_n := \frac{1}{N} \sum_{i=1}^N \mathbb{E}[\|X_n^i - \mathfrak{m}_n^{\alpha, N}\|_H^2]$ decays exponentially such that
\begin{equation*}
    V_n \le N (1 - hm)^n D_{\text{pair}},
\end{equation*}
where $D_{\text{pair}} := \max_{1 \le i, j \le N} \mathbb{E}[\|X_0^i - X_0^j\|_H^2]$ is the maximum expected initial pairwise squared distance.
\end{corollary}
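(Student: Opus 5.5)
The plan is to express each particle's deviation from the consensus point as a convex combination of pairwise differences and then apply Theorem \ref{thm:strong_consensus}. Since the weights $\omega_n^j$ are nonnegative and sum to one, we have
\[
X_n^i - \mathfrak{m}_n^{\alpha,N} = \sum_{j=1}^N \omega_n^j \,(X_n^i - X_n^j).
\]
The weights are random, since they depend on $\mathscr{E}(X_n^j)$, so I would never take expectations of products involving $\omega_n^j$. Instead I bound pathwise first. By convexity of $\|\cdot\|_H^2$ (Jensen's inequality with the probability weights $\omega_n^j$),
\[
\|X_n^i - \mathfrak{m}_n^{\alpha,N}\|_H^2 \le \sum_{j=1}^N \omega_n^j \|X_n^i - X_n^j\|_H^2 \le \sum_{j=1}^N \|X_n^i - X_n^j\|_H^2,
\]
where the last step uses $\omega_n^j\le 1$. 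This removes the random weights entirely, at the cost of a factor that becomes $N$ after summation.

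Next I take expectations and apply Theorem \ref{thm:strong_consensus} to each pair $(i,j)$. This gives
\[
\mathbb{E}\|X_n^i - \mathfrak{m}_n^{\alpha,N}\|_H^2 \le \sum_{j=1}^N (1-hm)^n\, \mathbb{E}\|X_0^i-X_0^j\|_H^2 \le N(1-hm)^n D_{\mathrm{pair}}.
\]
The $j=i$ term vanishes, so one could even write $N-1$. Averaging over $i$ then yields $V_n \le N(1-hm)^n D_{\mathrm{pair}}$, which is the claimed bound. The exponential decay follows from $0<hm<1$, exactly as in the theorem.

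The only delicate point is the randomness and $\mathcal{F}_n$-dependence of the Gibbs weights, which prevents passing the expectation inside the sum with the weights attached. The pathwise bound $\omega_n^j\le 1$ sidesteps this, and it is precisely what produces the factor $N$ in the statement.
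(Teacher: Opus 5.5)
Your proposal is correct and follows essentially the same route as the paper: a pathwise Jensen bound with the Gibbs weights, dropping the weights via $\omega_n^j \le 1$, then taking expectations, applying Theorem \ref{thm:strong_consensus} pairwise, bounding by $D_{\text{pair}}$, and averaging over $i$ to get the factor $N$. Your side remark that the $j=i$ term vanishes, so $N$ could be sharpened to $N-1$, is a harmless refinement that the paper does not make.
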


\begin{proof}
Recall the consensus point $\mathfrak{m}_n^{\alpha, N} = \sum_{j=1}^N \omega_n^j X_n^j$. By the convexity of the squared Hilbert norm, we bound the distance of any particle $i$ to the consensus point via Jensen's inequality
\begin{equation*}
    \|X_n^i - \mathfrak{m}_n^{\alpha, N}\|_H^2 = \left\| \sum_{j=1}^N \omega_n^j (X_n^i - X_n^j) \right\|_H^2 \le \sum_{j=1}^N \omega_n^j \|X_n^i - X_n^j\|_H^2.
\end{equation*}
Because the Gibbs weights are bounded ($0<\omega_n^j \le 1$), we drop them to recover a bound independent of $\alpha$ such as
\begin{equation*}
    \|X_n^i - \mathfrak{m}_n^{\alpha, N}\|_H^2 \le \sum_{j=1}^N \|X_n^i - X_n^j\|_H^2.
\end{equation*}

We can now take the unconditional expectation, then we average over all particles $i = 1, \dots, N$, and we substitute the decay bound from Theorem \ref{thm:strong_consensus}, obtaining
\begin{align*}
    V_n &\le \frac{1}{N} \sum_{i=1}^N \sum_{j=1}^N \mathbb{E}[\|X_n^i - X_n^j\|_H^2] \\
    &\le \frac{1}{N} \sum_{i=1}^N \sum_{j=1}^N (1 - hm)^n \mathbb{E}[\|X_0^i - X_0^j\|_H^2].
\end{align*}
Bounding each pairwise expectation by the maximum initial variance $D_{\text{pair}}$ gives $N^2$ identical terms. Dividing by the $1/N$ prefactor yields the  bound $N(1-hm)^n D_{\text{pair}}$.
\end{proof}

\begin{theorem}[Consensus, Part II]\label{thm:common_state}
In the situation of Theorem \ref{thm:strong_consensus}, there exists a common constant state $X_\infty \in V$ such that for every particle $i = 1, \dots, N$,
\begin{equation}
    \lim_{n \to \infty} X_n^i = X_\infty \quad \text{almost surely.}
\end{equation}
\end{theorem}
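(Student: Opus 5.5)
We will show that each trajectory $(X_n^i)_n$ is almost surely Cauchy in $V$, and then use the pairwise decay of Theorem \ref{thm:strong_consensus} to identify the limits across particles. Since $V$ is finite-dimensional, hence complete, an almost-sure limit $X_\infty^i\in V$ then exists for each $i$. The increments are read off from \eqref{eq:dynamics recursive}:
\begin{equation*}
X_{n+1}^i - X_n^i = -\lambda h\,(X_n^i-\mathfrak{m}_n^{\alpha,N}) + \sigma\sqrt{h}\,\|X_n^i-\mathfrak{m}_n^{\alpha,N}\|_H\, Z_n^Q,
\qquad Z_n^Q:=\sum_{k=1}^M\sqrt{\beta_k}\,\zeta_{n,k}\phi_k .
\end{equation*}
It therefore suffices to show that $\sum_n \|X_{n+1}^i-X_n^i\|_H<\infty$ almost surely. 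By monotone convergence, this follows from $\sum_n \mathbb{E}\|X_{n+1}^i-X_n^i\|_H<\infty$.

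\textbf{Step 1: bounding the expected increments.} We bound the expected norm of the increment by its conditional second moment, using $\mathbb{E}\|Y\|_H\le (\mathbb{E}\|Y\|_H^2)^{1/2}$. Conditioning on $\mathcal{F}_n$, the cross term vanishes as in Remark \ref{rmk: noise}, and we obtain
\begin{equation*}
\mathbb{E}\|X_{n+1}^i-X_n^i\|_H^2 = (\lambda^2h^2+\sigma^2hC_{Q,M})\,\mathbb{E}\|X_n^i-\mathfrak{m}_n^{\alpha,N}\|_H^2 .
\end{equation*}
The factor $\mathbb{E}\|X_n^i-\mathfrak{m}_n^{\alpha,N}\|_H^2$ is controlled by the Jensen argument from the proof of Corollary \ref{cor:variance_decay}, this time keeping a single fixed $i$ rather than averaging:
\begin{equation*}
\mathbb{E}\|X_n^i-\mathfrak{m}_n^{\alpha,N}\|_H^2\le \sum_{j=1}^N \mathbb{E}\|X_n^i-X_n^j\|_H^2\le N(1-hm)^nD_{\mathrm{pair}} .
\end{equation*}
Hence $\mathbb{E}\|X_{n+1}^i-X_n^i\|_H\le C\,(1-hm)^{n/2}$ with $C=\bigl((\lambda^2h^2+\sigma^2hC_{Q,M})ND_{\mathrm{pair}}\bigr)^{1/2}$. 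Since $0<1-hm<1$, the right-hand side is a summable geometric sequence, which gives the almost-sure convergence $X_n^i\to X_\infty^i$.

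\textbf{Step 2: all particles share the limit.} Fix $i,j$. By Fatou's lemma and Theorem \ref{thm:strong_consensus},
\begin{equation*}
\mathbb{E}\|X_\infty^i-X_\infty^j\|_H^2\le \liminf_{n\to\infty}\mathbb{E}\|X_n^i-X_n^j\|_H^2=0,
\end{equation*}
so $X_\infty^i=X_\infty^j$ almost surely. Since there are only finitely many pairs, we may set $X_\infty:=X_\infty^1$ and remove a single null set on which any of these equalities fails.

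\textbf{Main obstacle.} Everything above needs $D_{\mathrm{pair}}<\infty$, i.e.\ finite second moments of the initial data. This is implicit in Theorem \ref{thm:strong_consensus}: if $D_{\mathrm{pair}}=\infty$, its bound is vacuous. If only per-pair finiteness is available, the same argument goes through with $\sum_j\mathbb{E}\|X_0^i-X_0^j\|_H^2$ in place of $ND_{\mathrm{pair}}$. Beyond this, the only delicate point is passing from summable expected increments to almost-sure summability, and monotone convergence handles that step.
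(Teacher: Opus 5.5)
Your proof is correct, but it is organized differently from the paper's. The paper writes $X_n^i=X_0^i-\mathcal{S}_n^i+\mathcal{M}_n^i$ and handles the two sums separately. It proves convergence of the drift sum $\mathcal{S}_n^i$ by absolute summability of $\mathbb{E}\|X_p^i-\mathfrak{m}_p^{\alpha,N}\|_H$. It treats the noise sum $\mathcal{M}_n^i$ as an $L^2$-bounded $H$-valued martingale and invokes Doob's convergence theorem. It then identifies the limits by summing $\mathbb{E}\|X_n^i-X_n^j\|_H$, which gives $\|X_n^i-X_n^j\|_H\to0$ almost surely.

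You instead bound the full increment in $L^1$ through its second moment. This works because the noise term $\mathcal{N}_n^i$ carries the same factor $\|X_n^i-\mathfrak{m}_n^{\alpha,N}\|_H$ as the drift. So the noise is also absolutely summable in expectation, and no martingale convergence theorem is needed. If you used the triangle inequality instead of the second-moment identity, only the independence of $Z_n^Q$ from $\mathcal{F}_n$ would be used, not its mean-zero property.

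Since the paper's drift step already requires $\sum_p(\mathbb{E}\|X_p^i-\mathfrak{m}_p^{\alpha,N}\|_H^2)^{1/2}<\infty$, Doob's theorem buys nothing extra in this scheme. The martingale route pays off only in variants where the noise increments are square-summable but not absolutely summable, with the drift controlled by other means.

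Your route gives some extras:
\begin{itemize}
\item a quantitative byproduct, $\mathbb{E}\|X_n^i-X_\infty\|_H\le C(1-hm)^{n/2}/(1-\sqrt{1-hm})$ with your constant $C$, i.e.\ exponential $L^1$ convergence to the consensus state;
\item a per-particle bound $ND_{\mathrm{pair}}$, sharper than the paper's $N^2D_{\mathrm{pair}}$;
\item a slightly cleaner finish, via the Fatou identification of the limits and working directly in the complete finite-dimensional space $V$.
\end{itemize}
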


\begin{proof}
We shall consider the term-by-term sum of \eqref{eq:dynamics recursive} over the time indices. 
Let the stochastic increment be denoted $\mathcal{N}_p^i$. By expanding the discrete update rule recursively from $p=0$, one gets
\begin{equation*}
    X_n^i = X_0^i - \lambda h \sum_{p=0}^{n-1} (X_p^i - \mathfrak{m}_p^{\alpha, N}) + \sum_{p=0}^{n-1} \mathcal{N}_p^i =: X_0^i - \mathcal{S}_n^i + \mathcal{M}_n^i.
\end{equation*}

We analyze the almost sure convergence of the deterministic sum $\mathcal{S}_n^i$ and the stochastic sum $\mathcal{M}_n^i$ separately.

\textbf{Step 1.} \textit{(The martingale sum $\mathcal{M}_n^i$.)} \\
By Remark \ref{rmk: noise}, the stochastic increments $\mathcal{N}_p^i$ are mean-zero given the filtration $\mathcal{F}_p$. Thus, $\mathcal{M}_n^i$ is an $H$-valued discrete-time martingale. We evaluate its variance in $L^2(\Omega; H)$ that is
\begin{equation*}
    \mathbb{E}[\|\mathcal{M}_n^i\|_H^2] = \sum_{p=0}^{n-1} \mathbb{E}[\|\mathcal{N}_p^i\|_H^2] = \sigma^2 h C_{Q,M} \sum_{p=0}^{n-1} \mathbb{E}[\|X_p^i - \mathfrak{m}_p^{\alpha, N}\|_H^2].
\end{equation*}
Corollary \ref{cor:variance_decay} ensures that the expected swarm variance $V_p$ decays such that $V_p \le (1-hm)^p ND_{\text{pair}}$. Bounding the individual distance by the total swarm variance $N V_p$, we have
\begin{equation*}
    \mathbb{E}[\|\mathcal{M}_n^i\|_H^2] \le \sigma^2 h C_{Q,M} N^2D_{\text{pair}} \sum_{p=0}^{n-1} (1-hm)^p.
\end{equation*}
Because $1-hm < 1$, the infinite geometric series converges. Consequently, we have $\sup_n \mathbb{E}[\|\mathcal{M}_n^i\|_H^2] < \infty$. Since the martingale is bounded in $L^2$, Doob's Martingale Convergence Theorem for Hilbert spaces \cite[Chapter IV, Theorem 3]{scalora1958abstract} guarantees that the limit $\lim_{n \to \infty} \mathcal{M}_n^i$ exists almost surely.

\textbf{Step 2.} \textit{(The deterministic sum $\mathcal{S}_n^i$.)} \\
To prove that $\mathcal{S}_n^i$ converges, we show that the series is absolutely convergent almost surely. By Jensen's inequality for expectations and the swarm variance bound, it holds
\begin{equation*}
    \mathbb{E}[\|X_p^i - \mathfrak{m}_p^{\alpha, N}\|_H] \le \sqrt{\mathbb{E}[\|X_p^i - \mathfrak{m}_p^{\alpha, N}\|_H^2]} \le \sqrt{N^2D_{\text{pair}}} \big( \sqrt{1-hm} \big)^p.
\end{equation*}
Summing this expected norm from $p=0$ to $\infty$ yields a geometric series with a common ratio $\sqrt{1-hm} < 1$, thus
\begin{equation*}
    \sum_{p=0}^\infty \mathbb{E}[\|X_p^i - \mathfrak{m}_p^{\alpha, N}\|_H] < \infty.
\end{equation*}
By the Monotone Convergence Theorem (or Fubini's Theorem for non-negative random variables), we may exchange the expectation and the infinite sum:
\begin{equation*}
    \mathbb{E} \left[ \sum_{p=0}^\infty \|X_p^i - \mathfrak{m}_p^{\alpha, N}\|_H \right] < \infty.
\end{equation*}
Because the expectation of the infinite sum is finite, the sum itself must be finite almost surely. By definition, a Hilbert space $H$ is a complete metric space. A fundamental property of complete spaces is that absolute convergence implies convergence, therefore the sequence of partial sums forms a Cauchy sequence. Consequently, the deterministic series converges to a well-defined limit vector $\mathcal{S}_\infty^i \in H$ almost surely.

\textbf{Step 3.} \textit{(Emergence of a common state.)} \\
Recall the recursive expansion of the particle state: $X_n^i = X_0^i - \mathcal{S}_n^i + \mathcal{M}_n^i$. Because both the deterministic sum $\mathcal{S}_n^i$ and the martingale sum $\mathcal{M}_n^i$ converge almost surely to finite limits ($\mathcal{S}_\infty^i$ and $\mathcal{M}_\infty^i$, respectively), their linear combination must also converge. Thus, the sequence $X_n^i$ converges almost surely to some  random variable that is $X_\infty^i := X_0^i - \mathcal{S}_\infty^i + \mathcal{M}_\infty^i \in H$.

To prove that all particles converge to this same state, we recall from Theorem \ref{thm:strong_consensus} that $\mathbb{E}[\|X_n^i - X_n^j\|_H^2] \le (1-hm)^n N^2D_{\text{pair}}$. Applying Jensen's inequality and summing over time yields a convergent geometric series:
\begin{equation*}
    \sum_{n=0}^\infty \mathbb{E}[\|X_n^i - X_n^j\|_H] \le \sqrt{N^2D_{\text{pair}}} \sum_{n=0}^\infty \big(\sqrt{1-hm}\big)^n < \infty.
\end{equation*}
By the Monotone Convergence Theorem, $\mathbb{E} \left[ \sum_{n=0}^\infty \|X_n^i - X_n^j\|_H \right] < \infty$, which implies the infinite sum is finite almost surely. Since the sum converges, the individual terms must vanish, i.e. $\lim_{n \to \infty} \|X_n^i - X_n^j\|_H = 0$ almost surely. Therefore, for all $i, j \in \{1, \dots, N\}$, the individual limits must coincide: $X_\infty^i = X_\infty^j =: X_\infty$ almost surely. The proof shows that $X_{\infty}\in H$, but since $X^{i}_{n}$ remains in $V$ (see Definition \ref{def}) which is a subspace of $H$ hence is closed, one gets $X_{\infty}\in V$.
\end{proof}



\section{Global Convergence to the Minimum}\label{sec:global_convergence}

Having established that the particle swarm collapses to a common consensus point, we now track the evolution of the expected exponentiated energy of the swarm to prove convergence toward the active subspace minimizer $x_V^*$.

 We define the framework for the objective function and the reference initial data.

\begin{assumption}
\label{assump:error_framework} The objective function $\mathscr{E}$  and the initial data satisfy the following.
\begin{enumerate}

    \item The objective function $\mathscr{E}: H \to \mathbb{R}$ is continuously Fr\'echet differentiable ($C^1$) and attains a global minimum $\mathscr{E}_{\min} := \mathscr{E}(x_V^*) = \min_{x \in V} \mathscr{E}(x) \ge 0$ for some $x_V^* \in V$. Furthermore, it satisfies the following two conditions for all $x, y \in H$:
    \begin{itemize}
        \item \textit{One-Sided Lipschitz (semiconvexity):}  $\exists\,C_{\mathscr{E}} > 0$ a constant such that
        \begin{equation}\label{eq:osl_condition}
            \langle \nabla \mathscr{E}(x) - \nabla \mathscr{E}(y), x - y \rangle_H \ge -C_{\mathscr{E}} \|x - y\|_H^2.
        \end{equation}
        \item \textit{Quadratic Upper-Bound (semiconcavity):} $\exists\,L_{\mathscr{E}} > 0$ a constant such that
        \begin{equation}\label{eq:quadratic_condition}
            \mathscr{E}(y) - \mathscr{E}(x) \le \langle \nabla \mathscr{E}(x), y - x \rangle_H + \frac{L_{\mathscr{E}}}{2} \|y - x\|_H^2.
        \end{equation}
    \end{itemize}
    \item 
    The initial positions $\{X_0^i\}_{i=1}^N$ are drawn i.i.d. from a reference random variable $X_{in} \in V$. The law of $X_{in}$ has a finite second moment ($\mathbb{E}[\|X_{in}\|_H^2] < \infty$) and is well-prepared, meaning it assigns strictly positive probability mass to any arbitrary energy neighborhood of the active subspace minimizer $x_V^*$.
\end{enumerate}
\end{assumption}

\begin{remark}\label{rmk:interpretation}
The estimate \eqref{eq:quadratic_condition} states that the first-order Taylor approximation of $\mathscr{E}$ at $x$ provides a global quadratic upper bound on the energy. Moreover, interchanging the roles of $x$ and $y$ and adding the resulting inequalities yields
\[
\langle \nabla\mathscr{E}(x)-\nabla\mathscr{E}(y), x-y \rangle_H \le L_{\mathscr{E}}\|x-y\|_H^2,
\]
which may be interpreted as a one-sided upper Lipschitz bound on the gradient. Equivalently, $\mathscr{E}$ is $L_{\mathscr{E}}$-semiconcave, i.e., $x\mapsto \mathscr{E}(x)-\frac{L_{\mathscr{E}}}{2}\|x\|_H^2$ is concave.

Similarly, the assumption \eqref{eq:osl_condition} is a one-sided Lipschitz (or hypomonotonicity) condition on the gradient. Equivalently, $\mathscr{E}$ is $C_{\mathscr{E}}$-semiconvex, i.e., $x\mapsto \mathscr{E}(x)+\frac{C_{\mathscr{E}}}{2}\|x\|_H^2$ is convex. 

When together, they provide two-sided control on the directional variation of the gradient
\[
-C_{\mathscr{E}}\|x-y\|_H^2 \le \langle \nabla\mathscr{E}(x)-\nabla\mathscr{E}(y), x-y \rangle_H \le L_{\mathscr{E}}\|x-y\|_H^2.
\]
Although each of the two inequalities separately provides only a one-sided directional control on the variation of the gradient, their combination implies the global Lipschitz estimate
\[
\|\nabla\mathscr{E}(x)-\nabla\mathscr{E}(y)\|_H \le \bigl(L_{\mathscr{E}}+C_{\mathscr{E}}\bigr)\|x-y\|_H.
\]
Consequently, $\mathscr{E}\in C^{1,1}(H)$. See Appendix \ref{app:C11}. 

If moreover $\mathscr{E}$ is twice Fr\'echet differentiable, these assumptions are respectively equivalent to
\[
\nabla^2\mathscr{E}(x)\preceq L_{\mathscr{E}}I, \qquad \nabla^2\mathscr{E}(x)\succeq -C_{\mathscr{E}}I,
\]
where $\preceq$ denotes the Loewner order on self-adjoint operators. Thus, they may be viewed as first-order counterparts of one-sided Hessian bounds, while remaining meaningful without explicitly assuming the existence of second derivatives.
\end{remark}

For brevity, we define the dimensionless drift and noise variance parameters over a single time step
\begin{equation*}
    \gamma := \lambda h, \quad \text{and} \quad \zeta^2 := \sigma^2 h C_{Q,M}.
\end{equation*}
Recalling Theorem \ref{thm:strong_consensus}, we require for convergence that $m = 2\lambda - \lambda^2 h - \sigma^2 C_{Q,M} > 0$, and $1-hm<1$, which translate to $(1-\gamma)^2 + \zeta^2 < 1$. 

Furthermore, as noted in Remark \ref{rmk:independent of M}, these quantities can be made independent of the dimension $M$ of the active subspace, using  $0< C_{Q,M}\leq \mathrm{Trace}(Q)<\infty$. Then $\zeta^2 = \sigma^2 h\, \mathrm{Trace}(Q)$, and $h$ is chosen smaller such that $m = 2\lambda - \lambda^2 h - \sigma^2 \mathrm{Trace}(Q) > 0$.

     \begin{theorem}\label{thm:discrete_error}
Suppose Assumption \ref{assump:error_framework}-(1) holds, and the system parameters satisfy $(1-\gamma)^2 + \zeta^2 < 1$. For any $0 < \varepsilon < 1$, if the inverse temperature $\alpha > 0$ and the initial swarm variance parameter satisfy
\begin{equation}\label{eq:initial_condition_B}
    (1-\varepsilon)\mathbb{E}[e^{-\alpha \mathscr{E}(X_{in})}] \ge \frac{ \left( 2\gamma C_{\mathscr{E}} + L_\mathscr{E}(\gamma^2 + \zeta^2) \right) \alpha e^{-\alpha \mathscr{E}_{\min}}}{2(1 - ((1-\gamma)^2 + \zeta^2))} N D_{\text{pair}},
\end{equation}
then the common consensus state $X_\infty = \lim_{n \to \infty} X_n^i$ satisfies the error estimate
\begin{equation*}
    0 \le \essinf\limits_{\omega \in \Omega} \mathscr{E}(X_\infty) - \mathscr{E}_{\min} \le - \frac{\log \varepsilon}{\alpha} - \frac{1}{\alpha} \log \left( \mathbb{E}[e^{-\alpha (\mathscr{E}(X_{in}) - \mathscr{E}_{\min})}] \right).
\end{equation*}
\end{theorem}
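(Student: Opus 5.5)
Following the Ha--Jin--Kim strategy, I will track the averaged exponentiated energy
\[
\mathcal{Y}_n := \frac1N\sum_{i=1}^N \mathbb{E}\big[e^{-\alpha \mathscr{E}(X_n^i)}\big]
\]
and prove a telescoping lower bound $\mathcal{Y}_{n+1}\ge \mathcal{Y}_n - (\text{summable error})$. I will then pass to the limit using Theorem~\ref{thm:common_state}.

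\textbf{Step 1 (one-step energy increment).} Write $X_{n+1}^i - X_n^i = -\gamma(X_n^i-\mathfrak{m}_n^{\alpha,N}) + \mathcal{N}_n^i$. Apply the quadratic upper bound \eqref{eq:quadratic_condition} with $x=X_n^i$, $y=X_{n+1}^i$ and take $\mathbb{E}[\cdot\mid\mathcal{F}_n]$; the noise term in the linear part vanishes by Remark~\ref{rmk: noise}. Using $\mathbb{E}[\|X_{n+1}^i-X_n^i\|_H^2\mid\mathcal{F}_n]=(\gamma^2+\zeta^2)\|X_n^i-\mathfrak{m}_n^{\alpha,N}\|_H^2$, this gives
\[
\mathbb{E}[\mathscr{E}(X_{n+1}^i)-\mathscr{E}(X_n^i)\mid\mathcal{F}_n]\le -\gamma\langle\nabla\mathscr{E}(X_n^i),X_n^i-\mathfrak{m}_n^{\alpha,N}\rangle_H + \tfrac{L_{\mathscr{E}}}{2}(\gamma^2+\zeta^2)\|X_n^i-\mathfrak{m}_n^{\alpha,N}\|_H^2 .
\]

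\textbf{Step 2 (handling the gradient term).} I exponentiate with $e^{-\alpha u}\ge e^{-\alpha v}(1-\alpha(u-v))$ and use conditional Jensen, which yields
\[
\mathbb{E}[e^{-\alpha\mathscr{E}(X_{n+1}^i)}\mid\mathcal{F}_n]\ge e^{-\alpha\mathscr{E}(X_n^i)}\bigl(1-\alpha\,\mathbb{E}[\mathscr{E}(X_{n+1}^i)-\mathscr{E}(X_n^i)\mid\mathcal{F}_n]\bigr).
\]
Averaging over $i$ with weights $e^{-\alpha\mathscr{E}(X_n^i)}$ produces the linear term $\sum_i e^{-\alpha\mathscr{E}(X_n^i)}\langle\nabla\mathscr{E}(X_n^i),X_n^i-\mathfrak{m}_n^{\alpha,N}\rangle_H$. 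This equals $\sum_i e^{-\alpha\mathscr{E}(X_n^i)}\langle\nabla\mathscr{E}(X_n^i)-\nabla\mathscr{E}(\mathfrak{m}_n^{\alpha,N}),X_n^i-\mathfrak{m}_n^{\alpha,N}\rangle_H$, because $\sum_i\omega_n^i(X_n^i-\mathfrak{m}_n^{\alpha,N})=0$. By \eqref{eq:osl_condition} it is therefore $\ge -C_{\mathscr{E}}\sum_i e^{-\alpha\mathscr{E}(X_n^i)}\|X_n^i-\mathfrak{m}_n^{\alpha,N}\|_H^2$. Bounding $e^{-\alpha\mathscr{E}(X_n^i)}\le e^{-\alpha\mathscr{E}_{\min}}$ then gives
\[
\mathcal{Y}_{n+1}\ge \mathcal{Y}_n-\frac{\alpha e^{-\alpha\mathscr{E}_{\min}}}{2}\big(2\gamma C_{\mathscr{E}}+L_{\mathscr{E}}(\gamma^2+\zeta^2)\big)V_n .
\]
This step needs $\mathscr{E}_{\min}$ to lower bound $\mathscr{E}$ along the iterates, which holds since all $X_n^i\in V$.

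\textbf{Step 3 (summation and limit).} By the proof of Theorem~\ref{thm:strong_consensus}, each step contracts by the factor $(1-\gamma)^2+\zeta^2=1-hm$. Corollary~\ref{cor:variance_decay} then gives $\sum_n V_n\le ND_{\mathrm{pair}}/(1-((1-\gamma)^2+\zeta^2))$. Telescoping yields $\liminf_n\mathcal{Y}_n\ge\mathbb{E}[e^{-\alpha\mathscr{E}(X_{in})}]-(\text{RHS of }\eqref{eq:initial_condition_B})/(1-\varepsilon)\cdot(1-\varepsilon)$, and by \eqref{eq:initial_condition_B} this is $\ge\varepsilon\,\mathbb{E}[e^{-\alpha\mathscr{E}(X_{in})}]$, using $\mathcal{Y}_0=\mathbb{E}[e^{-\alpha\mathscr{E}(X_{in})}]$ from the i.i.d.\ initialization. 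Since $\mathscr{E}$ is continuous and $X_n^i\to X_\infty$ a.s., and $e^{-\alpha\mathscr{E}}\le e^{-\alpha\mathscr{E}_{\min}}$ on $V$, dominated convergence gives $\mathbb{E}[e^{-\alpha\mathscr{E}(X_\infty)}]\ge\varepsilon\,\mathbb{E}[e^{-\alpha\mathscr{E}(X_{in})}]$. Finally $e^{-\alpha\,\essinf\mathscr{E}(X_\infty)}\ge\mathbb{E}[e^{-\alpha\mathscr{E}(X_\infty)}]$; taking logarithms and dividing by $-\alpha$ gives the upper bound. The lower bound $0$ holds because $X_\infty\in V$.

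\textbf{Main obstacle.} The hard part is Step~2. The expansion must be arranged so that the noise cross term truly vanishes; this works because the linearization happens before conditioning, and the inequality $e^{-\alpha u}\ge e^{-\alpha v}(1-\alpha(u-v))$ avoids any need for an upper bound on the energy. The Gibbs-weight cancellation must also be performed correctly. I would first check that the constants match \eqref{eq:initial_condition_B} exactly, possibly replacing $V_n$ by $N(1-hm)^nD_{\mathrm{pair}}$.
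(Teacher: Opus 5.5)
Your proposal is correct and follows the paper's proof essentially step for step. The shared ingredients are the tangent-line lower bound for $e^{-\alpha\mathscr{E}}$, the quadratic upper bound with the mean-zero noise removing the cross term, the Gibbs-weight cancellation against $\nabla\mathscr{E}(\mathfrak{m}_n^{\alpha,N})$ followed by the one-sided Lipschitz bound and $e^{-\alpha\mathscr{E}(X_n^i)}\le e^{-\alpha\mathscr{E}_{\min}}$, and telescoping against $V_n\le N((1-\gamma)^2+\zeta^2)^nD_{\mathrm{pair}}$. Your dominated-convergence argument for passing from $\mathcal{Y}_n$ to $\mathbb{E}[e^{-\alpha\mathscr{E}(X_\infty)}]$ makes explicit a limit exchange the paper leaves implicit, while the appeal to conditional Jensen is unnecessary: the pathwise tangent-line inequality plus the $\mathcal{F}_n$-measurability of $e^{-\alpha\mathscr{E}(X_n^i)}$ already suffice.
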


\begin{remark}
One could observe that the right-hand side in \eqref{eq:initial_condition_B} admits a finite limit when $h\to 0$. Indeed
\begin{equation*}
\begin{aligned}
    \lim\limits_{h\to 0}\, \frac{ \left( 2\gamma C_{\mathscr{E}} + L_\mathscr{E}(\gamma^2 + \zeta^2) \right) }{2(1 - ((1-\gamma)^2 + \zeta^2))} 
    & = \frac{2\lambda C_{\mathscr{E}} + L_{\mathscr{E}}\sigma^2 C_{Q,M} }{2(2\lambda - \sigma^2 C_{Q,M})}\qquad \text{ if }\, 2\lambda > \sigma^2 C_{Q,M}\\
    & \leq \frac{2\lambda C_{\mathscr{E}} + L_{\mathscr{E}}\sigma^2 \mathrm{Trace}(Q) }{2(2\lambda - \sigma^2 \mathrm{Trace}(Q))}\quad \text{if }\, 2\lambda > \sigma^2 \mathrm{Trace}(Q),
\end{aligned}
\end{equation*}
where we used the bound in Remark \ref{rmk:independent of M} to get the upper-bound independent of $M$. Hence, a threshold from below in \eqref{eq:initial_condition_B} can be made uniform in both the discretization parameter $h$ (when small) and the dimension of the active space $M$.  
\end{remark}

\begin{proof}
We track the expected value of the exponentiated energy $e^{-\alpha \mathscr{E}(X_n^i)}$. Using the convexity property $e^y - e^x \ge e^x(y-x)$, we bound the forward difference path-by-path
\begin{equation}\label{eq:energy_diff_1}
    \frac{1}{N} \sum_{i=1}^N e^{-\alpha \mathscr{E}(X_{n+1}^i)} - \frac{1}{N} \sum_{i=1}^N e^{-\alpha \mathscr{E}(X_n^i)} \ge -\frac{\alpha}{N} \sum_{i=1}^N e^{-\alpha \mathscr{E}(X_n^i)} \big( \mathscr{E}(X_{n+1}^i) - \mathscr{E}(X_n^i) \big).
\end{equation}

Using the quadratic upper-bound condition \eqref{eq:quadratic_condition}, we bound the energy difference from above, which provides a lower bound for our negative term
\begin{equation*}
    \mathscr{E}(X_{n+1}^i) - \mathscr{E}(X_n^i) \le \langle \nabla \mathscr{E}(X_n^i), X_{n+1}^i - X_n^i \rangle_H + \frac{L_\mathscr{E}}{2} \|X_{n+1}^i - X_n^i\|_H^2.
\end{equation*}
Substituting this into \eqref{eq:energy_diff_1} yields
\begin{equation}\label{eq:energy_diff_2}
\begin{aligned}
    &\frac{1}{N} \sum_{i=1}^N e^{-\alpha \mathscr{E}(X_{n+1}^i)} - \frac{1}{N} \sum_{i=1}^N e^{-\alpha \mathscr{E}(X_n^i)} \\
    &\ge -\frac{\alpha}{N} \sum_{i=1}^N e^{-\alpha \mathscr{E}(X_n^i)} \left[ \langle \nabla \mathscr{E}(X_n^i), X_{n+1}^i - X_n^i \rangle_H + \frac{L_\mathscr{E}}{2} \|X_{n+1}^i - X_n^i\|_H^2 \right].
\end{aligned}
\end{equation}
We take the conditional expectation $\mathbb{E}[\,\cdot \mid \mathcal{F}_n]$ of both sides. Substituting the discrete update rule $X_{n+1}^i - X_n^i = -\gamma(X_n^i - \mathfrak{m}_n^{\alpha, N}) + \mathcal{N}_n^i$, the mean-zero property of the noise $\mathbb{E}[\mathcal{N}_n^i \mid \mathcal{F}_n] = 0$ (item (1) in Remark \ref{rmk: noise}) isolates the deterministic drift in the inner product. Additionally, using the conditional variance of the noise (item (2) in Remark \ref{rmk: noise}), the squared norm becomes  $(\gamma^2 + \zeta^2) \|X_n^i - \mathfrak{m}_n^{\alpha, N}\|_H^2$. Thus
\begin{equation}\label{eq:conditional_energy_step}
\begin{aligned}
    &\frac{1}{N} \sum_{i=1}^N \mathbb{E}[e^{-\alpha \mathscr{E}(X_{n+1}^i)} \mid \mathcal{F}_n] - \frac{1}{N} \sum_{i=1}^N e^{-\alpha \mathscr{E}(X_n^i)} \\
    &\ge \frac{\alpha}{N} \sum_{i=1}^N e^{-\alpha \mathscr{E}(X_n^i)} \left[ \gamma \langle \nabla \mathscr{E}(X_n^i), X_n^i - \mathfrak{m}_n^{\alpha, N} \rangle_H - \frac{L_\mathscr{E}}{2} (\gamma^2 + \zeta^2) \|X_n^i - \mathfrak{m}_n^{\alpha, N}\|_H^2 \right].
\end{aligned}
\end{equation}

To bound the inner product term, 
observe that by 
definition of the empirical consensus point $\sum e^{-\alpha \mathscr{E}(X_n^i)}(X_n^i - \mathfrak{m}_n^{\alpha, N}) = 0$, and we have
\begin{equation}\label{eq:gradient_vanish}
    \sum_{i=1}^N e^{-\alpha \mathscr{E}(X_n^i)} \langle \nabla \mathscr{E}(\mathfrak{m}_n^{\alpha, N}), X_n^i - \mathfrak{m}_n^{\alpha, N} \rangle_H = 0.
\end{equation}
Thus we can subtract \eqref{eq:gradient_vanish} from the right-hand side of \eqref{eq:conditional_energy_step} and put the gradient terms together. Then we apply the One-Sided Lipschitz condition \eqref{eq:osl_condition} to lower-bound the result
\begin{align*}
    \gamma \langle \nabla \mathscr{E}(X_n^i) - \nabla \mathscr{E}(\mathfrak{m}_n^{\alpha, N}), X_n^i - \mathfrak{m}_n^{\alpha, N} \rangle_H \ge -\gamma C_{\mathscr{E}} \|X_n^i - \mathfrak{m}_n^{\alpha, N}\|_H^2.
\end{align*}
Substituting this bound, and factoring out the squared distance while applying the uniform lower bound $\mathscr{E}(X_n^i) \ge \mathscr{E}_{\min} \implies e^{-\alpha \mathscr{E}(X_n^i)} \le e^{-\alpha \mathscr{E}_{\min}}$, we obtain
\begin{align*}
    &\frac{1}{N} \sum_{i=1}^N \mathbb{E}[e^{-\alpha \mathscr{E}(X_{n+1}^i)} \mid \mathcal{F}_n] - \frac{1}{N} \sum_{i=1}^N e^{-\alpha \mathscr{E}(X_n^i)} \\
    &\qquad \qquad \qquad \ge -\alpha \left( \gamma C_{\mathscr{E}} + \frac{L_\mathscr{E}}{2}(\gamma^2 + \zeta^2) \right) e^{-\alpha \mathscr{E}_{\min}} \frac{1}{N} \sum_{i=1}^N \|X_n^i - \mathfrak{m}_n^{\alpha, N}\|_H^2.
\end{align*}
Taking the unconditional expectation of both sides, the sum on the right yields the expected swarm variance $V_n$. Summing this recursive inequality telescopically from  $n=0$ to $\infty$, one gets
\begin{equation*}
    \mathbb{E} e^{-\alpha \mathscr{E}(X_\infty)} \ge \frac{1}{N} \sum_{i=1}^N \mathbb{E} e^{-\alpha \mathscr{E}(X_0^i)} - \alpha \left( \gamma C_{\mathscr{E}} + \frac{L_\mathscr{E}}{2}(\gamma^2 + \zeta^2) \right) e^{-\alpha \mathscr{E}_{\min}} \sum_{n=0}^\infty V_n.
\end{equation*}
By Corollary \ref{cor:variance_decay}, the variance decays exponentially as $V_n \le ((1-\gamma)^2 + \zeta^2)^n ND_{\text{pair}}$. Thus, the infinite geometric series converges to $\frac{ND_{\text{pair}}}{1 - ((1-\gamma)^2 + \zeta^2)}$. Substituting this finite sum and applying the well-preparedness condition \eqref{eq:initial_condition_B}, we guarantee
\begin{equation*}
    \mathbb{E} e^{-\alpha \mathscr{E}(X_\infty)} \ge \varepsilon\, \mathbb{E} e^{-\alpha \mathscr{E}(X_{in})}.
\end{equation*}
Because $e^{-\alpha \essinf\limits \mathscr{E}(X_\infty)} \ge \mathbb{E} e^{-\alpha \mathscr{E}(X_\infty)}$, we normalize the objective landscape by multiplying both sides by $e^{\alpha \mathscr{E}_{\min}}$, hence
\begin{equation*}
    e^{-\alpha (\essinf\limits \mathscr{E}(X_\infty) - \mathscr{E}_{\min})} \ge \varepsilon\, \mathbb{E} \left[ e^{-\alpha (\mathscr{E}(X_{in}) - \mathscr{E}_{\min})} \right].
\end{equation*}
Taking the natural logarithm of both sides and dividing by $-\alpha$ yields the final error estimate
\begin{equation*}
    \essinf\limits_{\omega \in \Omega} \mathscr{E}(X_\infty) - \mathscr{E}_{\min} \le - \frac{\log \varepsilon}{\alpha}- \frac{1}{\alpha} \log \left( \mathbb{E}[e^{-\alpha (\mathscr{E}(X_{in}) - \mathscr{E}_{\min})}] \right)\,.
\end{equation*}
\end{proof}

\begin{lemma}[Laplace Principle]
\label{lem:asymptotic_laplace}
Suppose the initial random variable $X_{in} \in V$ is well-prepared, meaning it assigns strictly positive probability mass to any arbitrary energy neighborhood of the global infimum (see Assumption \ref{assump:error_framework}-(2)). Specifically, for any $\delta > 0$, define the sublevel set $B_\delta := \{x \in V : \mathscr{E}(x) \le \mathscr{E}_{\min} + \delta\}$, and assume that
\begin{equation}\label{eq:pos_mass_assumption}
    \mathbb{P}(X_{in} \in B_\delta) > 0 \quad \text{for all } \delta > 0.
\end{equation}
Then, the logarithmic expectation converges to the global infimum as the inverse temperature $\alpha \to \infty$
\begin{equation*}
    \lim_{\alpha \to \infty} -\frac{1}{\alpha} \log \left( \mathbb{E} \left[ e^{-\alpha \mathscr{E}(X_{in})} \right] \right) = \mathscr{E}_{\min}.
\end{equation*}
\end{lemma}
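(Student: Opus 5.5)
The plan is a two-sided squeeze: I will prove $\liminf_{\alpha\to\infty}$ and $\limsup_{\alpha\to\infty}$ of $-\frac{1}{\alpha}\log \mathbb{E}[e^{-\alpha \mathscr{E}(X_{in})}]$ separately. First I reduce to the normalized quantity. Since $X_{in}\in V$ almost surely and $\mathscr{E}_{\min}=\min_{x\in V}\mathscr{E}(x)$ by Assumption \ref{assump:error_framework}-(1), we have $\mathscr{E}(X_{in})\ge \mathscr{E}_{\min}$ a.s. Therefore $e^{-\alpha(\mathscr{E}(X_{in})-\mathscr{E}_{\min})}\in(0,1]$, and the expectation is finite and positive, so the logarithm is well defined. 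Writing
\[
-\frac{1}{\alpha}\log \mathbb{E}\bigl[e^{-\alpha \mathscr{E}(X_{in})}\bigr] = \mathscr{E}_{\min} -\frac{1}{\alpha}\log \mathbb{E}\bigl[e^{-\alpha (\mathscr{E}(X_{in})-\mathscr{E}_{\min})}\bigr],
\]
it suffices to show that the last term tends to $0$.

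\textbf{Lower bound.} The bound $e^{-\alpha(\mathscr{E}(X_{in})-\mathscr{E}_{\min})}\le 1$ gives $\mathbb{E}[\cdots]\le 1$. Hence $-\frac{1}{\alpha}\log\mathbb{E}[\cdots]\ge 0$ for every $\alpha>0$, and the $\liminf$ is at least $\mathscr{E}_{\min}$.

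\textbf{Upper bound.} Fix $\delta>0$ and restrict the expectation to the sublevel set $B_\delta$. On $\{X_{in}\in B_\delta\}$ the integrand is at least $e^{-\alpha\delta}$, so
\[
\mathbb{E}\bigl[e^{-\alpha (\mathscr{E}(X_{in})-\mathscr{E}_{\min})}\bigr]\ge e^{-\alpha\delta}\,\mathbb{P}(X_{in}\in B_\delta).
\]
Taking $-\frac{1}{\alpha}\log$ of both sides yields
\[
-\frac{1}{\alpha}\log \mathbb{E}\bigl[\cdots\bigr]\le \delta-\frac{1}{\alpha}\log \mathbb{P}(X_{in}\in B_\delta).
\]
By \eqref{eq:pos_mass_assumption}, $\log\mathbb{P}(X_{in}\in B_\delta)$ is a finite constant independent of $\alpha$, so the second term vanishes as $\alpha\to\infty$. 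This gives $\limsup\le \mathscr{E}_{\min}+\delta$, and letting $\delta\downarrow 0$ completes the squeeze.

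There is no real obstacle. The only delicate point is the positivity assumption \eqref{eq:pos_mass_assumption}: without it the logarithm of the probability could be $-\infty$, and the upper bound would fail. This is exactly why the order of limits must be $\alpha\to\infty$ first, then $\delta\to0$. Measurability of $B_\delta$ follows from the continuity of $\mathscr{E}$.
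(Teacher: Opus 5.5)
Your proof is correct and follows the paper's argument: a squeeze with the pathwise bound $\mathscr{E}(X_{in})\ge\mathscr{E}_{\min}$ for the lower bound, restriction to $B_\delta$ together with positivity of $\mathbb{P}(X_{in}\in B_\delta)$ for the upper bound, and then $\alpha\to\infty$ before $\delta\to 0$. Normalizing by $\mathscr{E}_{\min}$ first is only a cosmetic difference from the paper, which keeps $e^{-\alpha\mathscr{E}_{\min}}$ explicit throughout.
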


\begin{proof}
We establish the limit by proving matching lower and upper bounds via a squeeze argument.

\textbf{Step 1.} \textit{(The Lower Bound.)} \\
By the definition of the global infimum, $\mathscr{E}(X_{in}) \ge \mathscr{E}_{\min}$ almost surely. Because the exponential function $y \mapsto e^{-\alpha y}$ is strictly decreasing for $\alpha > 0$, we have the pathwise upper bound
\begin{equation*}
    e^{-\alpha \mathscr{E}(X_{in})} \le e^{-\alpha \mathscr{E}_{\min}} \quad \text{a.s.}
\end{equation*}
Taking the expectation of both sides preserves the inequality
\begin{equation*}
    \mathbb{E} \left[ e^{-\alpha \mathscr{E}(X_{in})} \right] \le e^{-\alpha \mathscr{E}_{\min}}.
\end{equation*}
Applying the natural logarithm to both sides (which is monotonically increasing) and dividing by $-\alpha$ (which reverses the inequality direction) yields
\begin{equation}\label{eq:laplace_lower_bound}
    -\frac{1}{\alpha} \log \left( \mathbb{E} \left[ e^{-\alpha \mathscr{E}(X_{in})} \right] \right) \ge \mathscr{E}_{\min}.
\end{equation}
This holds for all $\alpha > 0$, thus we have  $\liminf_{\alpha \to \infty} -\frac{1}{\alpha} \log ( \mathbb{E} [ e^{-\alpha \mathscr{E}(X_{in})} ] ) \ge \mathscr{E}_{\min}$.

\textbf{Step 2.} \textit{(The Upper Bound.)} \\
Fix an arbitrarily small $\delta > 0$. Because the exponential function is strictly positive, we establish a strict lower bound on the expectation by restricting the integration domain to the sublevel set $B_\delta$
\begin{equation*}
    \mathbb{E} \left[ e^{-\alpha \mathscr{E}(X_{in})} \right] \ge \mathbb{E} \left[ e^{-\alpha \mathscr{E}(X_{in})} \mathbf{1}_{B_\delta}(X_{in}) \right].
\end{equation*}
For all realizations where $X_{in} \in B_\delta$, the energy satisfies $\mathscr{E}(X_{in}) \le \mathscr{E}_{\min} + \delta$. Consequently, the exponential weight is bounded below by $e^{-\alpha (\mathscr{E}_{\min} + \delta)}$. Factoring this deterministic minimum out of the restricted expectation gives
\begin{equation*}
    \mathbb{E} \left[ e^{-\alpha \mathscr{E}(X_{in})} \mathbf{1}_{B_\delta}(X_{in}) \right] \ge e^{-\alpha (\mathscr{E}_{\min} + \delta)} \mathbb{E}[\mathbf{1}_{B_\delta}(X_{in})] = e^{-\alpha (\mathscr{E}_{\min} + \delta)} \mathbb{P}(X_{in} \in B_\delta).
\end{equation*}
Taking the natural logarithm of both sides yields
\begin{equation*}
    \log \left( \mathbb{E} \left[ e^{-\alpha \mathscr{E}(X_{in})} \right] \right) \ge -\alpha(\mathscr{E}_{\min} + \delta) + \log \big( \mathbb{P}(X_{in} \in B_\delta) \big).
\end{equation*}
Dividing by $-\alpha$ isolates the desired functional form and reverses the inequality
\begin{equation}\label{eq:laplace_upper_bound}
    -\frac{1}{\alpha} \log \left( \mathbb{E} \left[ e^{-\alpha \mathscr{E}(X_{in})} \right] \right) \le \mathscr{E}_{\min} + \delta - \frac{1}{\alpha} \log \big( \mathbb{P}(X_{in} \in B_\delta) \big).
\end{equation}

\textbf{Step 3.} \textit{(The Asymptotic Limit.)} \\
By Assumption \eqref{eq:pos_mass_assumption}, the probability mass $\mathbb{P}(X_{in} \in B_\delta)$ is a strictly positive constant independent of $\alpha$. Therefore, its logarithm is finite. Taking the limit supremum of \eqref{eq:laplace_upper_bound} as $\alpha \to \infty$ eliminates the probability penalty term
\begin{equation*}
    \limsup_{\alpha \to \infty} -\frac{1}{\alpha} \log \left( \mathbb{E} \left[ e^{-\alpha \mathscr{E}(X_{in})} \right] \right) \le \mathscr{E}_{\min} + \delta - \lim_{\alpha \to \infty} \frac{\log \big( \mathbb{P}(X_{in} \in B_\delta) \big)}{\alpha} = \mathscr{E}_{\min} + \delta.
\end{equation*}
Because $\delta > 0$ was chosen arbitrarily, we may take the limit as $\delta \to 0$ to conclude
\begin{equation*}
    \limsup_{\alpha \to \infty} -\frac{1}{\alpha} \log \left( \mathbb{E} \left[ e^{-\alpha \mathscr{E}(X_{in})} \right] \right) \le \mathscr{E}_{\min}.
\end{equation*}
Combining this upper bound with the absolute lower bound \eqref{eq:laplace_lower_bound} completes the proof.
\end{proof}

\begin{corollary}[Convergence to Global Minimum] \label{cor:asymptotic_convergence}
Suppose the conditions of Theorem \ref{thm:discrete_error} and Lemma \ref{lem:asymptotic_laplace} hold. For any arbitrarily small target error $\eta > 0$ and any fixed $\varepsilon \in (0, 1)$, there exists a sufficiently large inverse temperature $\alpha > 0$ such that 
\begin{equation} \label{eq:upper_bound_eta}
    - \frac{\log \varepsilon}{\alpha} - \frac{1}{\alpha} \log \left( \mathbb{E} \left[ e^{-\alpha (\mathscr{E}(X_{in}) - \mathscr{E}_{\min})} \right] \right) < \eta.
\end{equation}
Consequently, if the initial swarm is drawn such that its maximum expected pairwise variance $D_{\text{pair}}$ (as defined in Corollary \ref{cor:variance_decay}) is sufficiently small to satisfy the well-preparedness condition \eqref{eq:initial_condition_B} evaluated at this specific $\alpha$, the common consensus state $X_\infty$ guarantees arbitrary precision to the global minimum
\begin{equation}
    0 \le  \essinf\limits_{\omega \in \Omega} \mathscr{E}(X_\infty) - \mathscr{E}_{\min} < \eta.
\end{equation}
\end{corollary}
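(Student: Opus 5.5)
The corollary follows by combining Lemma \ref{lem:asymptotic_laplace} with Theorem \ref{thm:discrete_error}. The order of the choices matters: first fix $\alpha$ using only the law of $X_{in}$, then impose smallness of $D_{\text{pair}}$ at that $\alpha$.

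\textbf{Step 1: choice of $\alpha$.} Write the left-hand side of \eqref{eq:upper_bound_eta} as
\[
-\frac{\log\varepsilon}{\alpha} + \Bigl(-\frac{1}{\alpha}\log \mathbb{E}\bigl[e^{-\alpha\mathscr{E}(X_{in})}\bigr] - \mathscr{E}_{\min}\Bigr).
\]
This uses that $e^{\alpha\mathscr{E}_{\min}}$ is deterministic, so it factors out of the expectation and contributes $-\mathscr{E}_{\min}$ after taking $-\frac{1}{\alpha}\log$. The first term tends to $0$ as $\alpha\to\infty$, since $\varepsilon\in(0,1)$ is fixed and $-\log\varepsilon$ is a finite positive constant. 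The bracketed term is nonnegative by \eqref{eq:laplace_lower_bound} and tends to $0$ by Lemma \ref{lem:asymptotic_laplace}. The positive-mass hypothesis \eqref{eq:pos_mass_assumption} is exactly Assumption \ref{assump:error_framework}-(2). Hence there is $\alpha_0$ such that for all $\alpha\ge\alpha_0$ each term is below $\eta/2$, and \eqref{eq:upper_bound_eta} holds. For a quantitative choice, use \eqref{eq:laplace_upper_bound} with $\delta=\eta/3$; it suffices to take
\[
\alpha > \frac{3}{\eta}\bigl(-\log\varepsilon - \log\mathbb{P}(X_{in}\in B_{\eta/3})\bigr).
\]

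\textbf{Step 2: smallness of $D_{\text{pair}}$.} Fix this $\alpha$. The left-hand side of \eqref{eq:initial_condition_B} is $(1-\varepsilon)\mathbb{E}[e^{-\alpha\mathscr{E}(X_{in})}]$. It is strictly positive, because it is bounded below by $e^{-\alpha(\mathscr{E}_{\min}+\delta)}\mathbb{P}(X_{in}\in B_\delta)>0$. The right-hand side is a finite constant, depending on $\alpha,\gamma,\zeta,C_{\mathscr{E}},L_{\mathscr{E}},N$ and $\mathscr{E}_{\min}$, multiplied by $D_{\text{pair}}$. The denominator is positive because $(1-\gamma)^2+\zeta^2<1$. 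Consequently \eqref{eq:initial_condition_B} holds whenever $D_{\text{pair}}$ lies below an explicit positive threshold, which is precisely the hypothesis of the corollary.

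\textbf{Step 3: conclusion.} Theorem \ref{thm:discrete_error} then yields
\[
0\le \essinf\mathscr{E}(X_\infty)-\mathscr{E}_{\min}\le -\frac{\log\varepsilon}{\alpha}-\frac{1}{\alpha}\log\mathbb{E}\bigl[e^{-\alpha(\mathscr{E}(X_{in})-\mathscr{E}_{\min})}\bigr].
\]
The right-hand side is $<\eta$ by Step 1.

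\textbf{Main point of care.} The only subtlety is this order of quantifiers. The $\alpha$ must be selected independently of $D_{\text{pair}}$, and the smallness requirement on $D_{\text{pair}}$ then depends on $\alpha$, roughly through the factor $\alpha e^{-\alpha\mathscr{E}_{\min}}/\mathbb{E}[e^{-\alpha\mathscr{E}(X_{in})}]$. For this reason, concentrating the initial law cannot be done uniformly in the target accuracy $\eta$.
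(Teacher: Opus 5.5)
Your proof is correct and follows the paper's own argument. You split off $-\mathscr{E}_{\min}$, then use the Laplace principle together with $-\log\varepsilon/\alpha\to 0$ to fix $\alpha$, and note that at this fixed $\alpha$ the right-hand side of \eqref{eq:initial_condition_B} is a finite constant times $N D_{\text{pair}}$, so the hypothesis makes Theorem \ref{thm:discrete_error} applicable. The explicit threshold for $\alpha$ obtained from \eqref{eq:laplace_upper_bound} with $\delta=\eta/3$ (valid, with room to spare) and the check that the left-hand side of \eqref{eq:initial_condition_B} is strictly positive are small refinements that the paper leaves implicit.
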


\begin{proof}
We evaluate the asymptotic behavior of the upper bound from Theorem \ref{thm:discrete_error}. The logarithmic expectation can be decomposed as
\begin{equation*}
    - \frac{1}{\alpha} \log \left( \mathbb{E} \left[ e^{-\alpha (\mathscr{E}(X_{in}) - \mathscr{E}_{\min})} \right] \right) = - \frac{1}{\alpha} \log \left( \mathbb{E} \left[ e^{-\alpha \mathscr{E}(X_{in})} \right] \right) - \mathscr{E}_{\min}.
\end{equation*}
By the asymptotic Laplace Principle established in Lemma \ref{lem:asymptotic_laplace}, the limit of the first term on the right-hand side evaluates to the global infimum $\mathscr{E}_{\min}$. Therefore, taking the limit of the entire expression as $\alpha \to \infty$ yields:
\begin{equation*}
    \lim_{\alpha \to \infty} \left( - \frac{1}{\alpha} \log \left( \mathbb{E} \left[ e^{-\alpha (\mathscr{E}(X_{in}) - \mathscr{E}_{\min})} \right] \right) \right) = \mathscr{E}_{\min} - \mathscr{E}_{\min} = 0.
\end{equation*}

Furthermore, for any fixed $\varepsilon \in (0, 1)$ it holds $\lim_{\alpha \to \infty} \frac{-\log \varepsilon}{\alpha} = 0$. Because the sum of the limits of these two terms is  zero, there exists a finite threshold $\alpha^* > 0$ such that for all $\alpha > \alpha^*$, this sum falls below the target error $\eta$.

Fixing the temperature parameter at such an $\alpha > \alpha^*$ reduces the right-hand side of the well-preparedness condition \eqref{eq:initial_condition_B} to a finite, deterministic constant proportional to $N D_{\text{pair}}$. By scaling the initial swarm distribution such that the pairwise spatial variance $D_{\text{pair}}$ is sufficiently small, the condition is satisfied, validating the error estimate of Theorem \ref{thm:discrete_error} and concluding the proof.
\end{proof}

\section{Numerical Illustrations}\label{sec:numerics}

\subsection{Example 1: Optimal Control of a PDE }\label{sec:control of pde}

To validate the algorithmic convergence bounds established in Theorem \ref{thm:discrete_error} without the interference of spatial truncation errors, we construct an example where the exact continuous minimizer $u^*$ resides entirely within the active subspace $V$. 

Let $\Omega = (0,1) \subset \mathbb{R}$ be a bounded domain. 
We consider the following Inverse Source Problem. 
This is an optimal control problem of a 1D elliptic PDE (Poisson equation), where the objective is to reconstruct a source term $u$ that drives the system state $y$ toward a target state $y_{\text{d}}$:
\begin{equation}\label{ex:control pde}
\begin{aligned}
    & \min\limits_{u \in L^2(\Omega)} \mathscr{E}(u) := \frac{1}{2}\|y - y_{\text{d}}\|_{L^2(\Omega)}^2 + \frac{\nu}{2}\|u\|_{L^2(\Omega)}^2, \qquad  \text{ for some } \nu > 0,\\
    & \text{subject to: }\; -\Delta y = u \quad \text{in } \Omega,\quad \text{ and } \; y = 0 \quad \text{on } \partial\Omega.
\end{aligned}
\end{equation}
We define the infinite-dimensional Hilbert space $H := L^2(\Omega)$, equipped with the standard inner product
\[
\langle u,v \rangle_H := \int_\Omega u(x)v(x)\,\dd x.
\]
The active subspace $V \subset H$ is spanned by the first $M$ orthonormal Fourier sine modes, which correspond to the eigenfunctions of the Dirichlet Laplacian: $\phi_k(x) = \sqrt{2} \sin(k \pi x)$ for $k = 1, \dots, M$. The corresponding eigenvalues are $\mu_k = (k\pi)^2$. 

By substituting the explicit spectral solution of the state equation $y(x) = \sum \frac{u_k}{\mu_k} \phi_k(x)$ into the cost functional, the optimality condition in the frequency domain ensures that the exact global minimizer $u^*$ has Fourier coefficients:
\begin{equation}
    u_k^* = \frac{d_k \mu_k}{1 + \nu \mu_k^2}, \quad \text{where } d_k = \langle y_{\text{d}}, \phi_k \rangle_H.
\end{equation}
To systematically eliminate spatial truncation errors and significantly reduce computational complexity, the objective functional is evaluated entirely in the frequency domain. Because the basis functions $\{\phi_k\}$ are the exact orthonormal eigenfunctions of the Dirichlet Laplacian with corresponding eigenvalues $\mu_k = (k\pi)^2$, the state equation $-\Delta y = u$ can be inverted analytically within the active subspace, yielding the spectral state coefficients $y_k = u_k/\mu_k$. By invoking Parseval's identity, the continuous $L^2(\Omega)$ norms defining the cost functional are mapped isometrically to discrete $\ell^2$ norms of the Fourier coefficients. Consequently, the energy of any particle state $X_n^i$ can be computed exactly as
$$ \mathscr{E}(X_n^i) = \frac{1}{2} \sum_{k=1}^M \left( \left| \frac{X_{n,k}^i}{\mu_k} - d_k \right|^2 + \nu |X_{n,k}^i|^2 \right), $$
completely bypassing the need for finite-difference grid discretizations and numerical integration.

To guarantee the infinite-dimensional minimizer satisfies $u^* \in V$, we construct the target state $y_{\text{d}}(x)$ such that $d_k = 0$ for all $k > M$. In our numerical experiments, we set $M=10$ and define the target state  using the first three modes: 
\[
y_{\text{d}}(x) = 2\phi_1(x) - \phi_2(x) + 0.5\phi_3(x).
\]
Because the target state contains no frequencies higher than $M$, the high-frequency components of the exact optimal control are identically zero ($u_k^* = 0$ for $k > M$). Consequently, the active subspace projection introduces zero Galerkin truncation error, allowing us to observe the exponential energy decay of the discrete-time CBO scheme.

\begin{algorithm}[h]
\caption{Spectral Discrete-Time CBO for Inverse Source PDE Problem}
\label{alg:cbo_pde}
\begin{algorithmic}[1]
\renewcommand{\algorithmicrequire}{\textbf{Input:}}
\renewcommand{\algorithmicensure}{\textbf{Output:}}
\REQUIRE Swarm size $N$, max iterations $N_{\text{iter}}$, CBO step size $h = \Delta t$.
\REQUIRE Target state Fourier coefficients $d_k = \langle y_{\text{d}}, \phi_k \rangle_H$, penalty $\nu$.
\REQUIRE CBO parameters: $\lambda$ (drift), $\sigma$ (noise), $\alpha$ (inverse temp), $M$ (Fourier modes).
\STATE \textbf{Initialize:} 
\STATE Define Dirichlet Laplacian eigenvalues $\mu_k = (k\pi)^2$ and trace-class noise eigenvalues $\beta_k = 1/k^2$.
\STATE Draw initial particles $\{X_0^i\}_{i=1}^N$ within the active subspace $V$, parameterized by their Fourier coefficients $X_{0,k}^i$.
\FOR{$n = 0$ \TO $N_{\text{iter}} - 1$}
    \STATE \textcolor{blue}{\textit{\% 1. Evaluate Objective Functional via Parseval's Identity}}
    \FOR{$i = 1$ \TO $N$}
        \STATE Evaluate exact energy: $\mathscr{E}(X_n^i) = \frac{1}{2} \sum_{k=1}^M \left( \left| \frac{X_{n,k}^i}{\mu_k} - d_k \right|^2 + \nu |X_{n,k}^i|^2 \right)$.
    \ENDFOR
    
    \STATE \textcolor{blue}{\textit{\% 2. Compute Stable Gibbs Weights (Log-Sum-Exp)}}
    \STATE Find $M_{\max} = \max_{j} (-\alpha \mathscr{E}(X_n^j))$.
    \FOR{$i = 1$ \TO $N$}
        \STATE $\omega_n^i = \frac{\exp(-\alpha \mathscr{E}(X_n^i) - M_{\max})}{\sum_{\ell=1}^N \exp(-\alpha \mathscr{E}(X_n^\ell) - M_{\max})}$.
    \ENDFOR
    \STATE Update consensus point: $\mathfrak{m}_n^{\alpha, N} = \sum_{i=1}^N \omega_n^i X_n^i$ (computed coordinate-wise).
    
    \STATE \textcolor{blue}{\textit{\% 3. Generate Common Active Subspace Noise (in Coefficient Space)}}
    \FOR{$k = 1$ \TO $M$}
        \STATE Draw independent standard normal variable: $\zeta_{n,k} \sim \mathcal{N}(0,1)$.
        \STATE Compute the noise coefficient: $W_{n,k} = \sqrt{\beta_k}\, \zeta_{n,k}$.
    \ENDFOR
    
    \STATE \textcolor{blue}{\textit{\% 4. Update Particle Swarm (in Coefficient Space)}}
    \FOR{$i = 1$ \TO $N$}
        \STATE Compute exact $L^2$-distance: $D_n^i = \sqrt{ \sum_{\ell=1}^M (X_{n,\ell}^i - \mathfrak{m}_{n,\ell}^{\alpha, N})^2 }$.
        \FOR{$k = 1$ \TO $M$}
            \STATE Scale common noise: $\mathcal{N}_{n,k}^i = \sigma \sqrt{h} D_n^i W_{n,k}$.
            \STATE Euler step: $X_{n+1,k}^i = X_{n,k}^i - \lambda h (X_{n,k}^i - \mathfrak{m}_{n,k}^{\alpha, N}) + \mathcal{N}_{n,k}^i$.
        \ENDFOR
    \ENDFOR
\ENDFOR
\vspace{0.1cm}
\ENSURE The final consensus control $\mathfrak{m}_{N_{\text{iter}}}^{\alpha, N}$.
\end{algorithmic}
\end{algorithm}

In our numerical experiments, the physical domain $\Omega = (0,1)$ is discretized uniformly using $N_x = 500$ grid points, and the control penalty parameter is set to $\nu = 0.01$. The active subspace $V$ is truncated to the first $M = 10$ Fourier sine modes, with trace-class noise eigenvalues prescribed as $\beta_k = 1/k^2$ to ensure the well-posedness of the diffusion process in the Hilbert space. The CBO algorithm is executed with a swarm size of $N = 500$ particles over $N_{\text{iter}} = 500$ iterations, utilizing a time step of $\Delta t = 0.01$ to guarantee the stability of the Euler-Maruyama discretization. To circumvent the convex hull trap (ensuring the true global minimizer lies within the swarm's initial search radius) the particles are initialized in the coefficient space using a broad Gaussian distribution, $\mathcal{N}(0, 15^2)$. Finally, the consensus dynamics are governed by a unit drift parameter $\lambda = 1.0$, a reduced noise scaling $\sigma = 0.2$, and an inverse temperature $\alpha = 10^9$. This specific parameter configuration is explicitly chosen to emphasize the exploitation phase; it enforces the application of Laplace principle, driving a rapid concentration of the Gibbs measure and ensuring the strict log-linear variance decay observed in the swarm's consensus trajectory.


\begin{figure}[htbp]
    \centering
    \makebox[\textwidth][c]{%
        \begin{subfigure}[b]{0.36\textwidth}
            \centering
            \includegraphics[width=\textwidth]{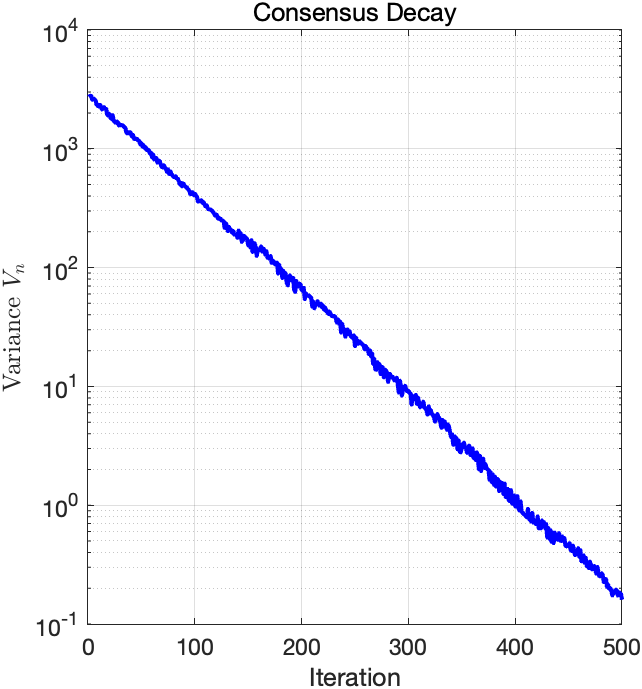}
            \label{fig:left}
        \end{subfigure}
        \hspace{0.015\textwidth}
        \begin{subfigure}[b]{0.36\textwidth}
            \centering
            \includegraphics[width=\textwidth]{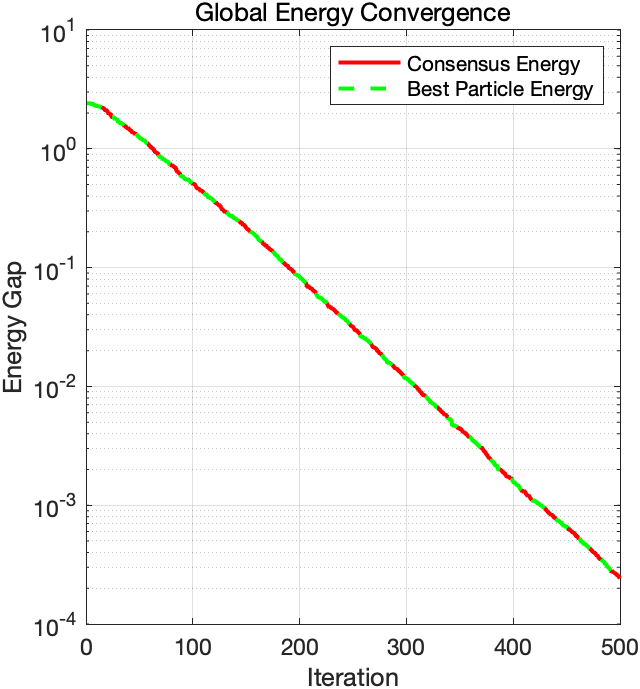}
            \label{fig:middle}
        \end{subfigure}
        \hspace{0.015\textwidth}
        \begin{subfigure}[b]{0.36\textwidth}
            \centering
            \includegraphics[width=\textwidth]{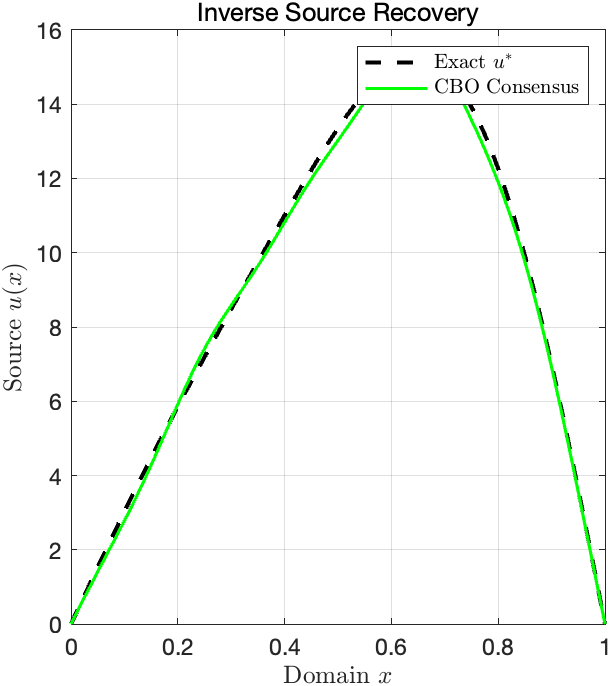}
            \label{fig:right}
        \end{subfigure}%
    }
    \caption{Numerical validation of the coefficient-space CBO algorithm for the inverse source problem. \textbf{Left:} Exponential decay of the empirical swarm variance. \textbf{Middle:} Convergence of the consensus and best-particle energy gaps relative to the exact continuous minimum, which coincides with the Galerkin subspace minimum in this example. \textbf{Right:} Comparison of the final reconstructed CBO consensus profile with the analytical minimizer $u^*$.}
    \label{fig:inverse_source_results}
\end{figure}


Figure \ref{fig:inverse_source_results} presents the numerical validation of the coefficient-space CBO algorithm applied to the inverse source problem. The left panel illustrates the evolution of the empirical swarm variance
\begin{equation*}
    V_n^{\mathrm{emp}} := \frac{1}{N}\sum_{i=1}^N \left\|X_n^i-\mathfrak{m}_n^{\alpha,N}\right\|_H^2,
\end{equation*}
which exhibits an approximately log-linear decay over $500$ iterations, in agreement with the exponential consensus behavior established by the theoretical analysis. The middle panel shows the convergence of the objective values. Specifically, the plotted ``Consensus Energy'' and ``Best Particle Energy'' represent the residual energy gaps $\mathscr{E}(\mathfrak{m}_n^{\alpha,N})-\mathscr{E}(u^*)$ and $\min_{1\leq i\leq N}\mathscr{E}(X_n^i)-\mathscr{E}(u^*)$, respectively. Since the prescribed target state has nonzero Fourier coefficients only in the first three modes, the analytical minimizer $u^*$ belongs to the active subspace for every $M\geq 3$. Consequently, the Galerkin subspace minimum coincides with the exact minimum of the original infinite-dimensional problem, and the plotted quantities therefore measure the energy gaps relative to the exact continuous minimum. The consensus and best-particle energy curves closely track each other and decay to a residual energy gap of approximately $10^{-4}$, indicating that the Gibbs-weighted consensus successfully identifies a neighborhood of the global minimizer while retaining stochastic exploration during the optimization process. Finally, the right panel compares the spatial profile of the reconstructed source term with the analytical minimizer $u^*$. The final CBO consensus solution shows close agreement with $u^*$, further demonstrating the ability of the proposed Hilbert-space CBO method to approximate the solution of the underlying infinite-dimensional PDE-constrained optimization problem.

\subsection{Example 2: Dimension Sensitivity}

To demonstrate the necessity of our infinite-dimensional formulation and its capacity to overcome the curse of dimensionality, we introduce a secondary numerical experiment designed to test the algorithm's stability as the active subspace dimension $M$ increases.  For this numerical experiment, we use the same example as in the previous subsection \S \ref{sec:control of pde}, for which the algorithmic parameters are configured as follows: $N = 20{,}000$, $N_{\text{iter}} = 500$, $\Delta t = 0.01$, $\nu = 0.01$, $\lambda = 1$, $\sigma = 0.1$, and $\alpha = 10^9$.

While the exact solution in the previous example \eqref{ex:control pde} was represented by just three Fourier modes, we now consider a target state with infinite non-zero Fourier coefficients. Specifically, we define a discontinuous step function as the target state:
\[
y_{\text{d}}(x) = 1 \quad \text{for}\quad x \in [0.25, 0.75], \qquad \text{ and }\quad  y_{\text{d}}(x) = 0 \quad \text{otherwise.}
\]
Because the corresponding Fourier coefficients $d_k$ decay slowly, the exact reconstruction of the sharp corners of the source requires a large truncation dimension in order to minimize the Galerkin truncation error.

    

\begin{figure}[htbp]
    \centering
    \makebox[\textwidth][c]{%
        \begin{subfigure}[b]{0.55\textwidth}
            \centering
            \includegraphics[width=\textwidth]{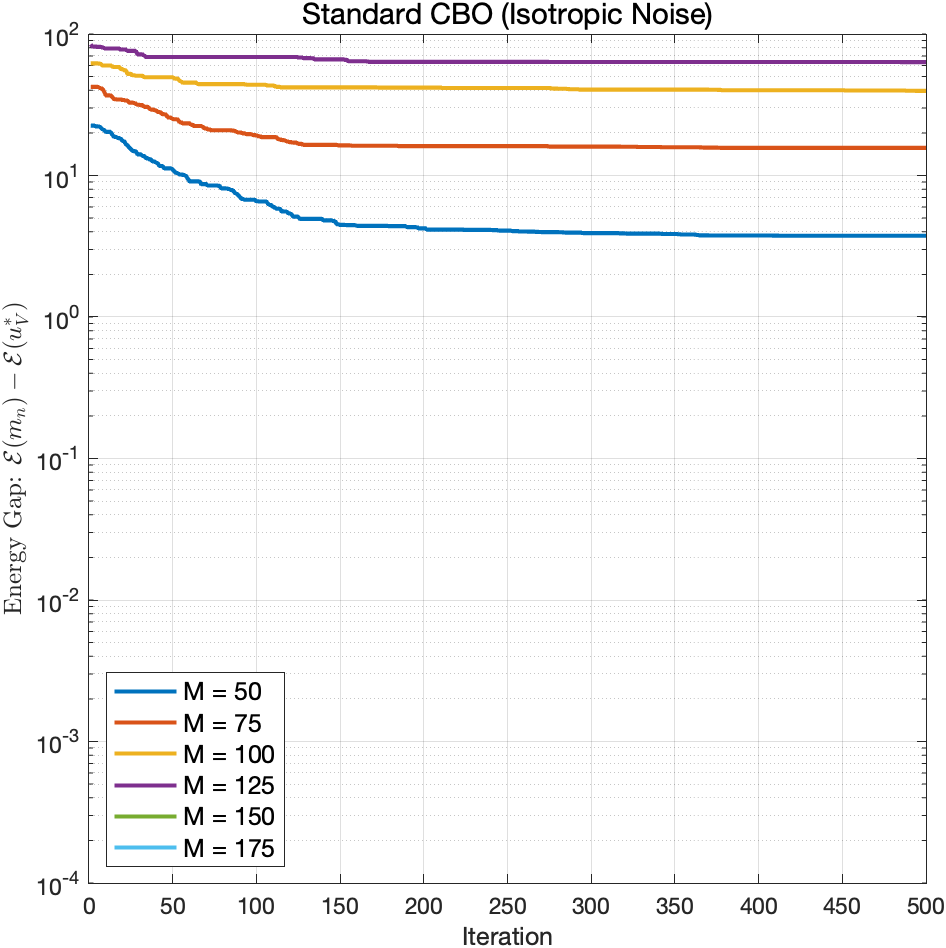}
            \label{fig:scaling_left}
        \end{subfigure}
        \hspace{0.02\textwidth}
        \begin{subfigure}[b]{0.55\textwidth}
            \centering
            \includegraphics[width=\textwidth]{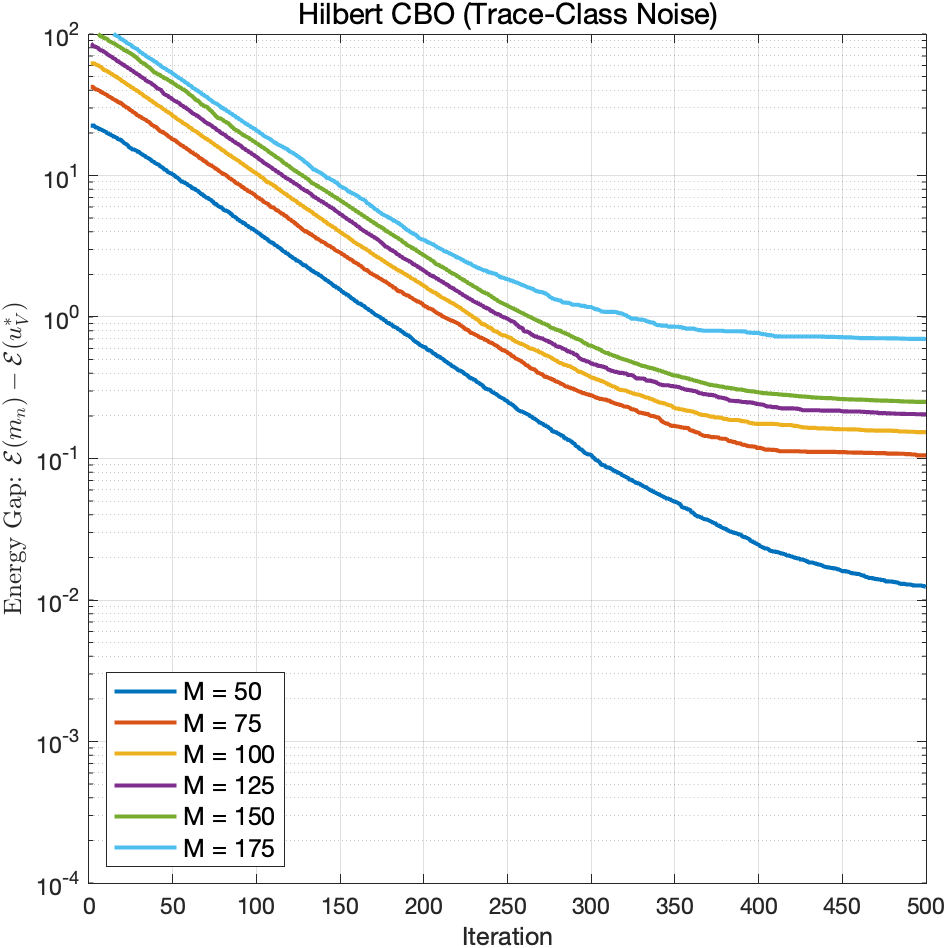}
            \label{fig:scaling_right}
        \end{subfigure}%
    }
    \caption{Comparative dimensional-scaling test for the inverse source problem \eqref{ex:control pde} with a discontinuous step-function target state. \textbf{Left:} \textit{Standard CBO with isotropic noise}. The algorithm fails to achieve energy descent for the higher-dimensional approximations ($M\geq 50$), as the total stochastic variance increases with the ambient dimension and leads to excessive dispersion of the particle swarm. \textbf{Right:} \textit{Hilbert-space CBO with trace-class noise ($\beta_k=1/k^2$)}. The total stochastic variance remains uniformly bounded with respect to $M$, preserving the optimization dynamics and enabling consistent energy descent as the Galerkin dimension increases, including at $M=175$.}
    \label{fig:dimension_scaling}
\end{figure}

Figure \ref{fig:dimension_scaling} presents a side-by-side comparison between standard finite-dimensional CBO with isotropic exploration noise and the proposed Hilbert-space CBO with trace-class noise, evaluated over a range of Galerkin dimensions from $M=50$ to $M=175$. In the standard isotropic formulation, all directions are excited with equal intensity. Consequently, the total stochastic variance injected into the particle system scales linearly with the ambient dimension, i.e., as $\mathcal{O}(M)$. As illustrated in the left panel of Figure \ref{fig:dimension_scaling}, this increasing stochastic contribution substantially deteriorates the optimization dynamics as $M$ grows. The resulting dispersion of the particle swarm prevents a systematic decrease of the objective value, and the dynamics stabilize at suboptimal energy levels without exhibiting meaningful energy descent.

In contrast, the proposed Hilbert-space CBO employs a trace-class covariance structure with eigenvalues $\beta_k=1/k^2$. Consequently, the total stochastic variance satisfies
\begin{equation*}
    C_{Q,M}=\sum_{k=1}^M\beta_k\leq\operatorname{Trace}(Q)<\infty,
\end{equation*}
uniformly with respect to the Galerkin dimension $M$; see Remark \ref{rmk:independent of M}. The decay of the covariance eigenvalues reduces stochastic excitation in the higher-frequency modes while retaining exploration along the dominant low-frequency directions. This behavior is reflected in the right panel of Figure \ref{fig:dimension_scaling}: the energy decay curves remain remarkably stable as $M$ increases from $50$ to $175$, with comparable convergence behavior across the tested spatial resolutions. The experiment therefore illustrates the dimension-robust character of the trace-class exploration mechanism and its relevance for function-space optimization, where increasing the Galerkin dimension is necessary to obtain increasingly accurate approximations of the underlying infinite-dimensional problem.

\begin{figure}[htbp]
    \centering
    \includegraphics[width=0.8\textwidth]{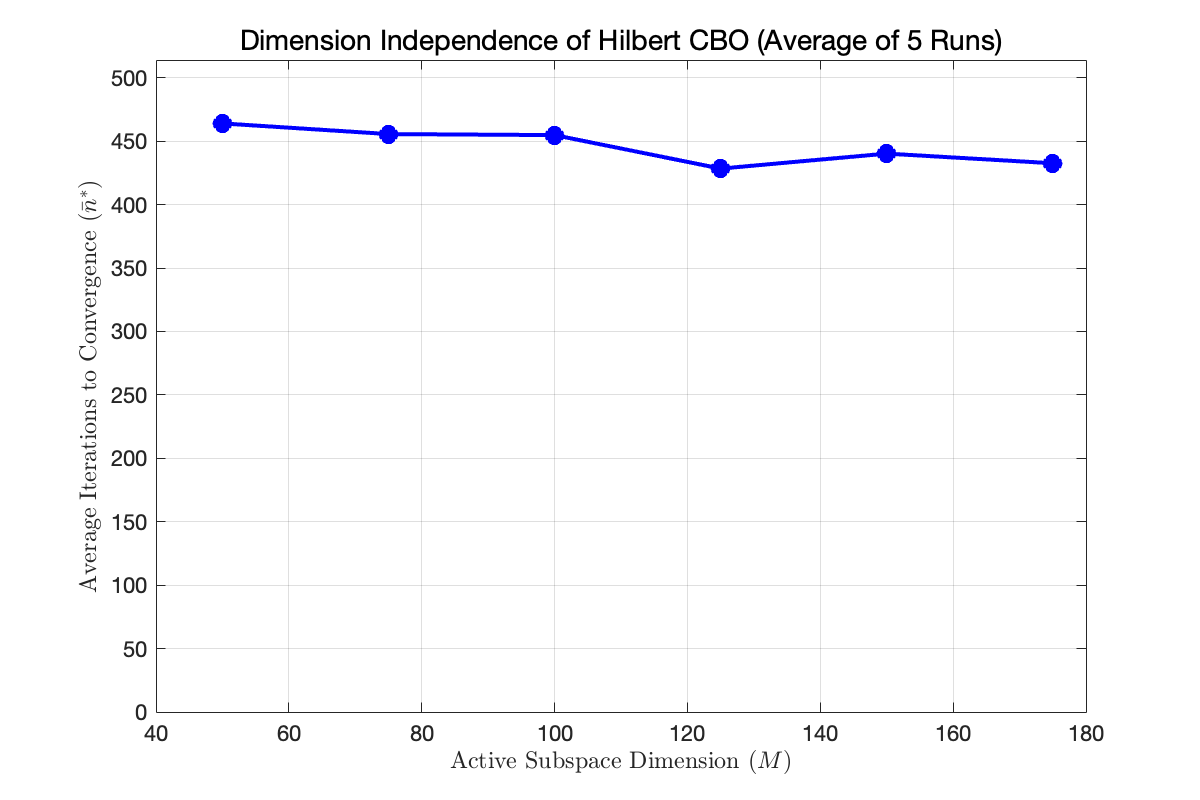}
    \caption{\textit{Dimension-robust convergence behavior of the proposed Hilbert-space CBO algorithm}. Average number of iterations $\bar{n}^*$ required to approach the terminal energy level as a function of the active subspace dimension $M$, computed over five realizations. The nearly constant iteration count across the tested dimensions indicates that the temporal convergence behavior is largely insensitive to the Galerkin dimension, consistently with the dimension-uniform control of the stochastic variance provided by the trace-class covariance structure.}
    \label{fig:iterations_convergence}
\end{figure}

To further investigate the dependence of the convergence behavior on the active subspace dimension, Figure \ref{fig:iterations_convergence} reports the number of iterations required by the Hilbert-space CBO algorithm to approach its terminal energy level for different values of $M$. For each realization, we denote by
\begin{equation*}
    E_{\mathrm{gap}}(n) := \mathscr{E}(\mathfrak{m}_n^{\alpha,N})-\mathscr{E}_{\min}
\end{equation*}
the consensus energy gap at iteration $n$, where $\mathfrak{m}_n^{\alpha,N}$ is the Gibbs-weighted consensus point and $\mathscr{E}_{\min}$ denotes the exact minimum of the objective over the active subspace. We estimate the terminal plateau value by averaging $E_{\mathrm{gap}}(n)$ over the final $50$ iterations,
\begin{equation*}
    E_{\mathrm{plat}} := \frac{1}{50} \sum_{n=N_{\mathrm{iter}}-49}^{N_{\mathrm{iter}}} E_{\mathrm{gap}}(n),
\end{equation*}
and define the convergence iteration as
\begin{equation*}
    n^* := \min\left\{n:\,
    E_{\mathrm{gap}}(n)-E_{\mathrm{plat}}
    \leq 0.05 \,E_{\mathrm{plat}}
    \right\}
    =
    \min\left\{n:\,
    E_{\mathrm{gap}}(n)
    \leq 1.05\,E_{\mathrm{plat}}
    \right\}.
\end{equation*}
Thus, $n^*$ is the first iteration at which the consensus energy gap is no more than $5\%$ above its estimated terminal plateau value. To account for the stochastic nature of the algorithm, this quantity is computed for five realizations at each dimension $M$, and the reported value $\bar{n}^*$ is their average.

As shown in Figure \ref{fig:iterations_convergence}, the average convergence iteration $\bar{n}^*$ remains approximately constant as the active subspace dimension increases from $M=50$ to $M=175$. This observation complements the dimension-uniform variance bound $C_{Q,M}\leq\operatorname{Trace}(Q)$ and indicates that, over the range of dimensions considered, increasing the spatial resolution does not lead to a noticeable increase in the number of iterations required to approach the terminal energy level. The experiment therefore provides numerical evidence of dimension-robust temporal convergence for the proposed trace-class CBO formulation.

\subsection{Example 3: Elliptic Energy with Mixed Boundary Conditions}

To further validate the proposed algorithm on problems with spatially varying coefficients and complex boundary constraints, we consider the direct minimization of an elliptic energy functional. Let the physical domain be the unit square $\Omega = (0,1)^2$. We impose mixed boundary conditions by partitioning the boundary $\partial\Omega$ into a Dirichlet boundary 
\[
\Gamma_D = \{(0,y) : 0\le y\le 1\} \cup \{(x,0) : 0\le x\le 1\}
\]
and a Neumann boundary 
\[
\Gamma_N = \{(1,y) : 0\le y\le 1\} \cup \{(x,1) : 0\le x\le 1\}.
\]

The search space is the admissible Sobolev subspace 
\[
H = \{ v\in H^1(\Omega) : v=0 \text{ on } \Gamma_D \}.
\]
The optimization problem seeks to minimize the objective functional
\begin{equation*}
    \min_{v\in H}\; J(v) := \frac{1}{2} \int_{\Omega} \left[ (1+x+y)|\nabla v|^2+v^2 \right]\,\dd x\dd y - \int_{\Omega} f\,v\,\dd x\,\dd y - \int_{\Gamma_N} g\,v\,\dd s
\end{equation*}
for some given data $f,g$. Equivalently, the global minimizer $u\in H$ is the weak solution satisfying the variational problem
\begin{equation*}
    \int_{\Omega} (1+x+y)\nabla u\cdot\nabla w\,\dd x\,\dd y + \int_{\Omega}u\,w\,\dd x\,\dd y = \int_{\Omega}f\,w\,\dd x\,\dd y + \int_{\Gamma_N}g\,w\,\dd s,
\end{equation*}
for all test functions $w\in H$.

\subsubsection*{Benchmark Construction}

For numerical validation, we construct a benchmark problem admitting a known exact solution and consider the corresponding strong form of the elliptic boundary value problem
\begin{equation*}
\left\{\;
\begin{aligned}
    -\nabla\cdot\left((1+x+y)\nabla u\right)+u &= f, && \quad \text{in }\Omega, \\
    u &= 0, && \quad \text{on }\Gamma_D, \\
    (1+x+y)\nabla u\cdot n &= g, && \quad \text{on }\Gamma_N.
\end{aligned}
\right.
\end{equation*}
We manufacture the exact analytical solution $u_{\mathrm{exact}}(x,y) = \sin(\pi x)\sin(\pi y)$. 
By substituting $u_{\mathrm{exact}}$ into the differential operator, we define the corresponding volume source term
\begin{equation*}
\begin{aligned}
f(x,y) &= 2\pi^2(1+x+y)\sin(\pi x)\sin(\pi y) \\
    &\quad - \pi\cos(\pi x)\sin(\pi y) - \pi\sin(\pi x)\cos(\pi y) + \sin(\pi x)\sin(\pi y).
\end{aligned}
\end{equation*}
Similarly, the Neumann boundary data is evaluated as
\begin{equation*}
g(x,y) =
\begin{cases}
-\pi(2+y)\sin(\pi y), & \text{on } x=1, \\
-\pi(2+x)\sin(\pi x), & \text{on } y=1.
\end{cases}
\end{equation*}
By construction, $u_{\mathrm{exact}} = \argmin_{v\in H}J(v)$. This formulation allows us to directly evaluate the CBO swarm's capacity to minimize an energy functional in $H^1(\Omega)$ where the spatially varying coefficient $(1+x+y)$ prevents perfect spectral diagonalization.

\subsubsection*{Numerical Configuration and Basis Selection}

To execute the numerical minimization for the mixed boundary value problem, the Hilbert CBO algorithm is configured with a swarm size of $N = 20{,}000$ particles evaluated over $N_{\text{iter}} = 800$ iterations. The consensus dynamics are driven by a time step of $\Delta t = 0.01$, an alignment drift rate of $\lambda = 1.0$, an exploration noise intensity of $\sigma = 0.1$, and an inverse temperature of $\alpha = 10^9$. As with the previous example, the noise covariance is scaled by trace-class eigenvalues $\beta_{m} = 1/m^2$ to ensure well-posedness in the infinite-dimensional setting.

The optimization is performed within a Galerkin active subspace of dimension $M = 36$. To properly accommodate the mixed boundary conditions, we construct the subspace using a half-period 2D Fourier sine basis:
$$ \phi_{k,l}(x,y) = \sin\left(k\frac{\pi}{2}x\right) \sin\left(\ell\frac{\pi}{2}y\right), \quad \text{for } 1 \le k,\ell \le 6. $$
This specific basis is chosen because it satisfies the homogeneous Dirichlet conditions on $\Gamma_D$ ($x=0$ and $y=0$) while naturally possessing non-zero spatial derivatives at the boundaries $\Gamma_N$ ($x=1$ and $y=1$). This structural flexibility allows the swarm to satisfy the non-zero Neumann flux $g(x,y)$ without introducing artificial spatial distortions at the domain edges. Furthermore, the exact analytical solution $u_{\text{exact}}$ is explicitly represented by the $k=2, \ell=2$ mode, guaranteeing that the Galerkin spatial truncation error is strictly zero.

From a computational perspective, the spatially varying coefficient $(1+x+y)$ prevents perfect spectral diagonalization. To maintain computational efficiency, the stiffness matrix $K$ and load vector $F$ are assembled offline using numerical quadrature on a dense $300 \times 300$ spatial grid. Consequently, the iterative CBO algorithm executes entirely within the $36$-dimensional coefficient space, evaluating the discrete energy of the whole swarm via vectorized, grid-free algebraic operations.

The numerical results for this mixed boundary value problem are presented in Figure \ref{fig:example2_results}. Because the spatially varying coefficient $(1+x+y)$ prevents perfect spectral diagonalization, this test thoroughly evaluates the algorithm's robustness. 

As shown in the left panel, the consensus energy gap $|J(\mathfrak{m}_n) - J(u^*)|$ exhibits a rapid, stable exponential decay during the initial 400 iterations before smoothly plateauing at a minimal residual error. This confirms that the trace-class stochastic exploration efficiently navigates the complex energy landscape without scattering. Furthermore, the middle and right panels demonstrate a flawless visual agreement between the exact analytical solution and the final CBO consensus state. The algorithm successfully reconstructs the global minimizer, strictly satisfying both the homogeneous Dirichlet conditions on $\Gamma_D$ and the non-zero Neumann flux on $\Gamma_N$, entirely within the discrete active subspace.

\begin{figure}[htbp]
    \centering
    \makebox[\textwidth][c]
    {%
        \includegraphics[width=1.5\textwidth]{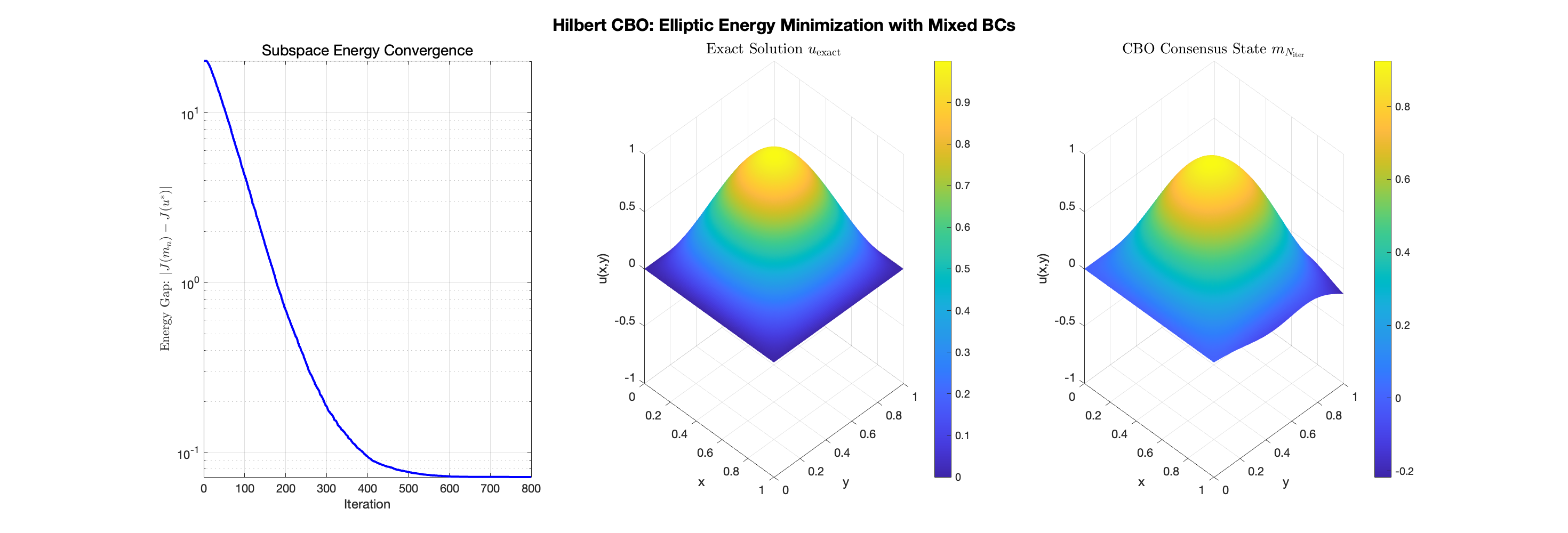}%
    }
    \caption{Numerical evaluation of Hilbert-space CBO for the elliptic energy minimization problem with mixed boundary conditions (Example 2). \textbf{Left:} Logarithmic convergence of the consensus energy gap relative to the exact subspace minimum, showing rapid initial decay followed by stabilization. \textbf{Middle:} Exact solution $u_{\mathrm{exact}}(x,y)=\sin(\pi x)\sin(\pi y)$. \textbf{Right:} Final consensus state $\mathfrak{m}_{N_{\mathrm{iter}}}$ obtained by the algorithm. The close agreement between the numerical and exact solutions demonstrates the ability of the method to handle spatially varying coefficients and nontrivial Neumann boundary data without introducing artificial distortions near the boundary.}
    \label{fig:example2_results}
\end{figure}

\section{Conclusion}

In this work, we established a convergence theory for a time-discrete, finite-particle Consensus-Based Optimization algorithm in a separable Hilbert space. The analysis was carried out directly at the level of the computable, spatially truncated particle system, thereby complementing the continuous-time and mean-field Hilbert-space theory developed in \cite{huang2026derivative}. Building on ideas from the finite-dimensional discrete CBO analysis of \cite{ha2020convergence,ha2021convergence}, we showed that the consensus mechanism extends to the Hilbert-space setting while retaining control with respect to the dimension of the active subspace.

The convergence analysis proceeded in two steps. First, exploiting the common-noise structure of the particle dynamics, we derived quantitative pairwise contraction estimates and established exponential decay of the expected swarm variance. This allowed us to prove that all particles converge almost surely to a common random consensus state $X_\infty\in V$. Second, we addressed the optimization quality of this limiting state. By controlling the evolution of the expected exponentiated energy and employing a quantitative Laplace principle, we showed that, under suitable assumptions on the objective functional and sufficiently well-prepared initial data, the energy of $X_\infty$ can be made arbitrarily close to the global minimum over the active subspace by choosing the inverse temperature parameter sufficiently large.

An important feature of the Hilbert-space formulation is the use of trace-class stochastic exploration. The truncated stochastic variance satisfies $C_{Q,M}\leq \operatorname{Trace}(Q)$ uniformly with respect to the Galerkin dimension $M$, providing a natural mechanism for controlling the exploration as the spatial approximation is refined. The numerical experiments, consisting of an elliptic energy minimization problem with mixed boundary conditions and a PDE-constrained inverse source problem, support the theoretical findings. In particular, they illustrate consensus formation and convergence of the objective values, while the comparison with isotropic finite-dimensional CBO highlights the robustness of trace-class exploration under increasing Galerkin dimension.

Several directions remain open for future investigation. A natural next step is to quantify jointly the optimization error and the spatial approximation error as the Galerkin dimension tends to infinity, thereby connecting the discrete finite-dimensional implementation more directly with the underlying infinite-dimensional optimization problem. It would also be of interest to investigate convergence under more general noise structures and particle-dependent stochastic forcing, as well as adaptive strategies for selecting the active subspace and the covariance spectrum. Finally, extending the present discrete-time analysis to broader classes of constrained, nonsmooth, or genuinely infinite-dimensional optimization problems may further expand the applicability of CBO to variational, inverse, and PDE-constrained optimization.

\section*{Acknowledgments}

This work is partially supported by the Start-up Grant from Hunan University and the Deutsche Forschungsgemeinschaft (DFG, German Research Foundation) with the financial support through HE5386/34-1 Partikelmethoden für unendlich dimensionale Optimierung (561130572) and HE5386/33-1 Control of Interacting Particle Systems, and Their Mean-Field, and Fluid-Dynamic Limits (560288187). Funding from the European Union’s Horizon Europe research and innovation programme under the Marie Sklodowska-Curie Doctoral Network Datahyking (Grant No. 101072546) is acknowledged.

\appendix

\section{Regularity from geometry}\label{app:C11}

Recall that a Fr\'echet differentiable functional $\Phi:H\to\mathbb R$ is said to be $M$-semiconcave if
\[
x\longmapsto \Phi(x)-\frac{M}{2}\|x\|_H^2
\]
is concave, and $M$-semiconvex if
\[
x\longmapsto \Phi(x)+\frac{M}{2}\|x\|_H^2
\]
is convex. Equivalently, by the first-order characterization of concavity and convexity, these become 
\[
\Phi(v) \le \Phi(u) + \langle\nabla\Phi(u),v-u\rangle_H + \frac{M}{2}\|v-u\|_H^2,
\]
and
\[
\Phi(v) \ge \Phi(u) + \langle\nabla\Phi(u),v-u\rangle_H - \frac{M}{2}\|v-u\|_H^2,
\]
respectively.

The estimate \eqref{eq:quadratic_condition} states that the first-order Taylor approximation of $\mathscr{E}$ at $x$ provides a global quadratic upper bound on the energy. Interchanging the roles of $x$ and $y$ in \eqref{eq:quadratic_condition} gives
\[
\mathscr{E}(x)-\mathscr{E}(y) \le \langle\nabla\mathscr{E}(y),x-y\rangle_H +\frac{L_{\mathscr{E}}}{2}\|x-y\|_H^2.
\]
Adding the two inequalities yields
\[
\langle \nabla\mathscr{E}(x)-\nabla\mathscr{E}(y),
x-y \rangle_H \le L_{\mathscr{E}}\|x-y\|_H^2,
\]
which is a one-sided upper Lipschitz estimate for the gradient. Conversely, this inequality is precisely the monotonicity inequality associated with the concavity of $x\mapsto\mathscr{E}(x)-\frac{L_{\mathscr{E}}}{2}\|x\|_H^2$. Hence, \eqref{eq:quadratic_condition} is equivalent to $L_{\mathscr{E}}$-semiconcavity of $\mathscr{E}$.

Similarly, the assumption \eqref{eq:osl_condition} is a one-sided Lipschitz (or hypomonotonicity) condition on the gradient. Indeed, rewriting it as
\[
\langle (\nabla\mathscr{E}(x)+C_{\mathscr{E}}x) - (\nabla\mathscr{E}(y)+C_{\mathscr{E}}y), x-y \rangle_H \ge 0
\]
shows that the mapping $x\mapsto\nabla\mathscr{E}(x)+C_{\mathscr{E}}x$ is monotone. By the first-order characterization of convexity, this is equivalent to the convexity of
\[
x\longmapsto \mathscr{E}(x)+\frac{C_{\mathscr{E}}}{2}\|x\|_H^2,
\]
that is, to the $C_{\mathscr{E}}$-semiconvexity of $\mathscr{E}$.


Therefore, \eqref{eq:osl_condition} when together with \eqref{eq:quadratic_condition} is equivalent to assuming that $\mathscr{E}$ is both semi-convex and semi-concave. 

The following lemma shows that in this case, $\mathscr{E}$ is $C^{1,1}$. Such a result is known in the finite dimensional case (see \cite[Corollary 3.3.8, p.61]{cannarsa2004semiconcave}). We provide below a self-consistent proof in infinite-dimensional Hilbert spaces.

\begin{lemma}
Let $\mathscr{E}:H\to\mathbb R$ be Fr\'echet differentiable. If $\mathscr{E}$ is simultaneously $C$-semiconvex and $L$-semiconcave, then $\mathscr{E}\in C^{1,1}(H)$,  and $\operatorname{Lip}(\nabla\mathscr{E}) \le L+C$. In particular, the conclusion remains true if $\mathscr{E}$ satisfies \eqref{eq:osl_condition}-\eqref{eq:quadratic_condition}.
\end{lemma}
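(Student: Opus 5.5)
The plan is to reduce the statement to a pure function-value estimate, from which the gradient Lipschitz bound follows by testing along suitable directions. Semiconvexity and semiconcavity, in their first-order forms recalled above, give for all $u,v\in H$ the two-sided Taylor bound
\[
\Bigl|\mathscr{E}(v)-\mathscr{E}(u)-\langle\nabla\mathscr{E}(u),v-u\rangle_H\Bigr|\le \frac{K}{2}\|v-u\|_H^2,\qquad K:=\max(L,C).
\]
This bound alone yields a Lipschitz constant of order $2K$, which is weaker than the claimed $L+C$. To obtain the sharp constant, I will keep the lower constant $C$ and the upper constant $L$ separate throughout the argument.

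Fix $x,y\in H$ and set $w:=\nabla\mathscr{E}(x)-\nabla\mathscr{E}(y)$. If $w=0$ there is nothing to prove. Otherwise, take any $z\in H$ and apply the upper (semiconcave) bound at base point $x$ with increment $z$:
\[
\mathscr{E}(x+z)\le \mathscr{E}(x)+\langle\nabla\mathscr{E}(x),z\rangle+\tfrac L2\|z\|^2 .
\]
Then apply the lower (semiconvex) bound at base point $y$ to the two points $x+z$ and $x$:
\[
\mathscr{E}(x+z)\ge \mathscr{E}(y)+\langle\nabla\mathscr{E}(y),x+z-y\rangle-\tfrac C2\|x+z-y\|^2,
\]
\[
\mathscr{E}(x)\le \mathscr{E}(y)+\langle\nabla\mathscr{E}(y),x-y\rangle+\tfrac L2\|x-y\|^2 .
\]
Combining these three inequalities eliminates the function values and gives
\[
\langle w,z\rangle\ge -\tfrac L2\|z\|^2-\tfrac C2\|x+z-y\|^2-\tfrac L2\|x-y\|^2 .
\]

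The main obstacle is choosing $z$ so that this yields exactly $\|w\|\le (L+C)\|x-y\|$; the naive choice $z=-t w/\|w\|$ followed by optimizing in $t$ leaves a cross term. If the algebra does not close, I will use the standard equivalent route. Set $g:=\mathscr{E}+\frac C2\|\cdot\|^2$, which is convex and satisfies
\[
g(v)\le g(u)+\langle\nabla g(u),v-u\rangle+\tfrac{L+C}{2}\|v-u\|^2,
\]
because $\mathscr{E}(v)-\mathscr{E}(u)-\langle\nabla\mathscr{E}(u),v-u\rangle\le \frac L2\|v-u\|^2$ and $\frac C2\|v\|^2-\frac C2\|u\|^2-C\langle u,v-u\rangle=\frac C2\|v-u\|^2$. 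For a convex Fréchet-differentiable $g$ with this $(L+C)$-smooth upper bound, the Baillon–Haddad/co-coercivity argument works verbatim in Hilbert space. Define $\psi(v):=g(v)-\langle\nabla g(x),v\rangle$, which is convex, minimized at $x$, and $(L+C)$-smooth. Its descent step gives
\[
\psi(x)\le\psi(y)-\tfrac{1}{2(L+C)}\|\nabla\psi(y)\|^2 ,
\]
so that
\[
g(y)\ge g(x)+\langle\nabla g(x),y-x\rangle+\tfrac{1}{2(L+C)}\|\nabla g(x)-\nabla g(y)\|^2 .
\]
Symmetrizing in $x$ and $y$ gives co-coercivity, and Cauchy–Schwarz then gives $\|\nabla g(x)-\nabla g(y)\|\le (L+C)\|x-y\|$.

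Finally, $\nabla\mathscr{E}=\nabla g-C\,\mathrm{Id}$, so the triangle inequality only gives $L+2C$. To reach $L+C$, I will use a spectral-type argument instead. The operator $\nabla g$ is the gradient of a convex function and is $(L+C)$-co-coercive, so the map $T:=\nabla g-\frac{L+C}{2}\mathrm{Id}$ is nonexpansive up to the factor $\frac{L+C}{2}$. This is the standard fact that $\beta$-co-coercivity is equivalent to $\|\nabla g(x)-\nabla g(y)-\frac\beta2(x-y)\|\le\frac\beta2\|x-y\|$, with $\beta=L+C$. Writing
\[
\nabla\mathscr{E}=T+\tfrac{L-C}{2}\mathrm{Id}
\]
then gives
\[
\|w\|\le\tfrac{L+C}{2}\|x-y\|+\tfrac{|L-C|}{2}\|x-y\|=\max(L,C)\|x-y\|\le(L+C)\|x-y\|,
\]
which proves the claim. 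The last sentence of the lemma follows because the paper has already shown that \eqref{eq:osl_condition} and \eqref{eq:quadratic_condition} are equivalent to semiconvexity and semiconcavity.
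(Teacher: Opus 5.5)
Your fallback argument is correct. It follows the paper's proof in its first half and departs from it in the second.

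The first half coincides with the paper's Step~1. Your $g=\mathscr{E}+\frac C2\|\cdot\|_H^2$ is the paper's $\mathscr{F}$; it is convex and $(L+C)$-semiconcave. The descent step applied to $g-\langle\nabla g(x),\cdot\rangle_H$ then yields the co-coercivity inequality $\langle\nabla g(x)-\nabla g(y),x-y\rangle_H\ge\frac{1}{L+C}\|\nabla g(x)-\nabla g(y)\|_H^2$.

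The second half is where the routes differ. The paper keeps only the Lipschitz consequence of co-coercivity, obtained by Cauchy--Schwarz. To avoid the constant $L+2C$ that the triangle inequality would give via $\nabla\mathscr{E}=\nabla\mathscr{F}-C\,\mathrm{Id}$, it introduces a second convex, $(L+C)$-semiconcave function $\mathscr{G}=\frac L2\|\cdot\|_H^2-\mathscr{E}$. It then writes $p=\nabla\mathscr{E}(x)-\nabla\mathscr{E}(y)$ as $\frac{L}{L+C}(p+Ch)-\frac{C}{L+C}(Lh-p)$ with $h=x-y$.

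You instead retain co-coercivity and rewrite it, by expanding a square, as $\|p-\frac{L-C}{2}h\|_H\le\frac{L+C}{2}\|h\|_H$. This has two advantages:
\begin{itemize}
\item It uses a single auxiliary function.
\item It gives the sharper constant $\max(L,C)\le L+C$, which is optimal (compare $-C\,I\preceq\nabla^2\mathscr{E}\preceq L\,I$).
\end{itemize}
The two Lipschitz bounds the paper uses cannot produce $\max(L,C)$ by themselves, since they are compatible with $\|p\|_H=\sqrt{L^2+LC+C^2}\,\|h\|_H$. Applying your rewriting to $\mathscr{G}$ returns exactly your inequality again, so once co-coercivity is kept, the second function carries no extra information. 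In exchange, the paper's Step~2 needs only the Lipschitz conclusion of Step~1 and no reformulation of co-coercivity.

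For a clean write-up:
\begin{itemize}
\item Delete the unfinished three-inequality attempt. With $z=-t\,w/\|w\|_H$ it only reaches $L+2C$. Its third inequality is also mislabelled as semiconvex, although it uses the semiconcave constant $L$.
\item Treat the case $L+C=0$ separately, as the paper does, since your descent step divides by $L+C$.
\end{itemize}
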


\begin{proof}
Recall a function is said to be $L$-smooth, if its gradient is $L$-Lipschitz. 

\textbf{Step 1.} \textit{(Convexity and semiconcavity imply $L$-smoothness.)}\\
We first show that if a Fr\'echet differentiable function
$\Phi:H\to\mathbb R$ is convex and $M$-semiconcave, then $\|\nabla\Phi(x)-\nabla\Phi(y)\|_H \le M\|x-y\|_H$ for all $x,y\in H.$

If $M=0$, then $\Phi$ is both convex and concave, hence affine, and there is nothing to prove. Assume therefore that $M>0$. Since subtracting a linear functional preserves both convexity and $M$-semiconcavity, the function
\[
\Psi(z):=\Phi(z)-\langle\nabla\Phi(y),z\rangle_H
\]
is convex and $M$-semiconcave. Moreover, $\nabla\Psi(y)=0$,  so  $y$ is a global minimizer of $\Psi$. By $M$-semiconcavity,
\[
\Psi(v) \le \Psi(u) + \langle\nabla\Psi(u),v-u\rangle_H + \frac{M}{2}\|v-u\|_H^2, \qquad u,v\in H.
\]
Indeed, this is the first-order characterization of the concavity of
$\Psi-\frac{M}{2}\|\cdot\|_H^2$. Applying this inequality with
\[
u=x, \qquad v=x-\frac1M\nabla\Psi(x),
\]
yields
\[
\begin{aligned}
    \Psi(v)
    & \le \Psi(x) - \frac{1}{M}\|\nabla\Psi(x)\|_H^2 + \frac{M}{2} \left\| \frac1M\nabla\Psi(x) \right\|_H^2 \\
    & = \Psi(x) - \frac{1}{2M} \|\nabla\Psi(x)\|_H^2.
\end{aligned}
\]
Since $y$ minimizes $\Psi$, $\Psi(y)\le\Psi(v)$, and therefore
\[
\Psi(y) \le \Psi(x) - \frac1{2M} \|\nabla\Psi(x)\|_H^2.
\]
Recalling the definition of $\Psi$, we have $\nabla \Psi(x) = \nabla \Phi(x) - \nabla \Phi(y)$, and using the previous inequality we obtain
\[
\Phi(x)-\Phi(y) - \langle\nabla\Phi(y),x-y\rangle_H \ge \frac{1}{2M} \|\nabla\Phi(x)-\nabla\Phi(y)\|_H^2.
\]
Interchanging the roles of $x$ and $y$ and adding the two inequalities
gives
\[
\langle \nabla\Phi(x)-\nabla\Phi(y), x-y \rangle_H \ge \frac{1}{M} \|\nabla\Phi(x)-\nabla\Phi(y)\|_H^2.
\]
Finally, the Cauchy--Schwarz inequality implies
\[
\|\nabla\Phi(x)-\nabla\Phi(y)\|_H^2 \le M \|\nabla\Phi(x)-\nabla\Phi(y)\|_H \|x-y\|_H,
\]
and hence
\[
\|\nabla\Phi(x)-\nabla\Phi(y)\|_H \le M\|x-y\|_H.
\]

\textbf{Step 2.} \textit{(Semiconvex and semiconcave imply $C^{1,1}$.)}\\
Define
\[
\mathscr{F}(x) := \mathscr{E}(x)+\frac{C}{2}\|x\|_H^2, \qquad \mathscr{G}(x) := \frac{L}{2}\|x\|_H^2-\mathscr{E}(x).
\]
By the $C$-semiconvexity and $L$-semiconcavity of $\mathscr{E}$, both $\mathscr{F}$ and $\mathscr{G}$ are convex. Moreover,
\[
\mathscr{F}+\mathscr{G} = \frac{L+C}{2}\|\cdot\|_H^2.
\]
Consequently, $\mathscr{F}$ is $(L+C)$-semiconcave, since
\[
\mathscr{F}(x)-\frac{L+C}{2}\|x\|_H^2 = -\mathscr{G}(x)
\]
is concave. Similarly,
\[
\mathscr{G}(x)-\frac{L+C}{2}\|x\|_H^2 = -\mathscr{F}(x),
\]
so $\mathscr{G}$ is also $(L+C)$-semiconcave.

If $L+C=0$, then necessarily $L=C=0$. In this case, $\mathscr{E}$ is both convex and concave, hence affine, and $\nabla\mathscr{E}$ is constant. We may therefore assume that $L+C>0$.

Applying Step 1 to $\mathscr{F}$ and $\mathscr{G}$, with $M=L+C$, yields for all $x,y\in H$
\[
\|\nabla\mathscr{F}(x)-\nabla\mathscr{F}(y)\|_H \le (L+C)\|x-y\|_H
\quad \text{ and } \quad 
\|\nabla\mathscr{G}(x)-\nabla\mathscr{G}(y)\|_H \le (L+C)\|x-y\|_H.
\]

Let
\[
p:=\nabla\mathscr{E}(x)-\nabla\mathscr{E}(y), \qquad h:=x-y.
\]
Then
\[
\nabla\mathscr{F}(x)-\nabla\mathscr{F}(y) = p+Ch, \qquad \text{ and } \qquad  \nabla\mathscr{G}(x)-\nabla\mathscr{G}(y) = Lh-p.
\]
Since
\[
p = \frac{L}{L+C}(p+Ch) - \frac{C}{L+C}(Lh-p),
\]
the triangle inequality gives
\[
\begin{aligned}
    \|p\|_H
    & \le \frac{L}{L+C}\|p+Ch\|_H + \frac{C}{L+C}\|Lh-p\|_H \\
    & = \frac{L}{L+C}\|\nabla\mathscr{F}(x)-\nabla\mathscr{F}(y)\|_H + \frac{C}{L+C}\|\nabla\mathscr{G}(x)-\nabla\mathscr{G}(y)\|_H \\
    & \le \frac{L}{L+C}(L+C)\|h\|_H + \frac{C}{L+C}(L+C)\|h\|_H 
     = (L+C)\|h\|_H.
\end{aligned}
\]
Hence,
\[
\|\nabla\mathscr{E}(x)-\nabla\mathscr{E}(y)\|_H \le \bigl(L+C_{\mathscr{E}}\bigr)\|x-y\|_H, \qquad x,y\in H.
\]
Therefore, $\nabla\mathscr{E}$ is globally Lipschitz continuous and $\mathscr{E}\in C^{1,1}(H)$. 
\end{proof}

\begin{remark}
The argument in Step 1 of the above proof is closely related to the classical Baillon-Haddad theorem from convex optimization (see \cite[Corollaire 10]{baillon1977quelques}, \cite{bauschke2010baillon}, \cite[Corollary 18.17, p.323]{bauschke2011convex}). The latter states that if a convex function has an $M$-Lipschitz continuous gradient, then its gradient is $\frac{1}{M}$-cocoercive. In the proof above, we proceed in the opposite direction: starting from $M$-semiconcavity (equivalently, the quadratic upper bound condition), we first derive the cocoercivity inequality 
\[
\langle \nabla\Phi(x)-\nabla\Phi(y), x-y \rangle_H \ge \frac{1}{M} \|\nabla\Phi(x)-\nabla\Phi(y)\|_H^2,
\]
and then recover the $M$-Lipschitz continuity of the gradient by the Cauchy-Schwarz inequality. Thus, the present argument may be viewed as a converse counterpart to the classical Baillon-Haddad implication, with semiconcavity replacing the a priori smoothness assumption.
\end{remark}

\bibliography{bibliography}

@article{ha2021convergence,
  title={Convergence and error estimates for time-discrete consensus-based optimization algorithms},
  author={Ha, Seung Yeal and Jin, Shi and Kim, Doheon},
  journal={Numerische Mathematik},
  volume={147},
  number={2},
  pages={255--282},
  year={2021},
  publisher={Springer New York}
}

@book{scalora1958abstract,
  title={Abstract martingale convergence theorems},
  author={Scalora, Frank Salvatore},
  year={1958},
  publisher={University of Illinois at Urbana-Champaign}
}

@article{bauschke2010baillon,
  title={The {B}aillon-{H}addad Theorem Revisited},
  author={Bauschke, Heinz H and Combettes, Patrick L},
  journal={Journal of Convex Analysis},
  volume={17},
  number={3\&4},
  pages={781--787},
  year={2010}
}

@article{baillon1977quelques,
  title={Quelques propri{\'e}t{\'e}s des op{\'e}rateurs angle-born{\'e}s et n-cycliquement monotones},
  author={Baillon, Jean-Bernard and Haddad, Georges},
  journal={Israel Journal of Mathematics},
  volume={26},
  number={2},
  pages={137--150},
  year={1977},
  publisher={Springer}
}

@misc{bauschke2011convex,
  title={Convex Analysis and Monotone Operator Theory in {H}ilbert Spaces},
  author={Bauschke, Heinz H and Combettes, Patrick L},
  year={2011},
  publisher={Springer Publishing Company, Incorporated}
}

@book{cannarsa2004semiconcave,
  title={Semiconcave Functions, {H}amilton-{J}acobi Equations, and Optimal Control},
  author={Cannarsa, Piermarco and Sinestrari, Carlo},
  year={2004},
  publisher={Springer}
}

@article{kang2026time,
  title={Time-Discrete Consensus-Based Optimization Algorithm for Multi-Objective Optimization Problems},
  author={Kang, Myeongju and Bae, Hyeong-Ohk and Ha, Seung-Yeal and Min, Chanho and Yoo, Jane and Yoon, Wook},
  journal={Mathematical Models and Methods in Applied Sciences},
  year={2026},
  publisher={World Scientific}
}

@article{bianchi2025consensus,
  title={Consensus-based optimization beyond finite-time analysis},
  author={Bianchi, Pascal and Dragomir, Radu-Alexandru and Priser, Victor},
  journal={arXiv preprint arXiv:2509.12907},
  year={2025}
}

@article{beddrich2026constrained,
  title={Constrained consensus-based optimization and numerical heuristics for the few particle regime},
  author={Beddrich, Jonas and Chenchene, Enis and Fornasier, Massimo and Huang, Hui and Wohlmuth, Barbara},
  journal={Journal of Global Optimization},
  pages={1--54},
  year={2026},
  publisher={Springer}
}

@article{aceves2026consensus,
  title={Consensus-based optimization with $\alpha$-stable jump processes},
  author={Aceves-Sanchez, Pedro and Albi, Giacomo and Ferrarese, Federica and Herty, Michael},
  journal={arXiv preprint arXiv:2604.05626},
  year={2026}
}

@article{bungert2025polarized,
  title={Polarized consensus-based dynamics for optimization and sampling},
  author={Bungert, Leon and Roith, Tim and Wacker, Philipp},
  journal={Mathematical Programming},
  volume={211},
  number={1},
  pages={125--155},
  year={2025},
  publisher={Springer}
}

@article{borghi2023adaptive,
  title={An adaptive consensus based method for multi-objective optimization with uniform Pareto front approximation},
  author={Borghi, Giacomo and Herty, Michael and Pareschi, Lorenzo},
  journal={Applied Mathematics \& Optimization},
  volume={88},
  number={2},
  pages={58},
  year={2023},
  publisher={Springer}
}

@article{borghi2026variational,
  title={Variational inference via {G}aussian interacting particles in the {B}ures-{W}asserstein geometry},
  author={Borghi, Giacomo and Carrillo, Jos{\'e} A},
  journal={arXiv preprint arXiv:2601.00632},
  year={2026}
}

@article{borghi2025model,
  title={Model predictive control strategies using consensus-based optimization},
  author={Borghi, Giacomo and Herty, Michael},
  journal={Mathematical Control and Related Fields},
  volume={15},
  number={3},
  pages={876--894},
  year={2025},
  publisher={American Institute of Mathematical Sciences}
}

@article{lyu2025consensus,
  title={Consensus {B}ased {S}tochastic {C}ontrol},
  author={Lyu, Liyao and Chen, Jingrun},
  journal={arXiv preprint arXiv:2501.17801},
  year={2025}
}

@article{kalise2023consensus,
  title={Consensus-based optimization via jump-diffusion stochastic differential equations},
  author={Kalise, Dante and Sharma, Akash and Tretyakov, Michael V},
  journal={Mathematical Models and Methods in Applied Sciences},
  volume={33},
  number={02},
  pages={289--339},
  year={2023},
  publisher={World Scientific}
}

@article{de2025mean,
  title={Mean-Field Model for Two-Layer Neural Networks Trained with Consensus-Based Optimization},
  author={De Deyn, William and Herty, Michael and Samaey, Giovanni},
  journal={arXiv preprint arXiv:2511.21466},
  year={2025}
}

@article{riedl2024leveraging,
  title={Leveraging memory effects and gradient information in consensus-based optimisation: On global convergence in mean-field law},
  author={Riedl, Konstantin},
  journal={European Journal of Applied Mathematics},
  volume={35},
  number={4},
  pages={483--514},
  year={2024},
  publisher={Cambridge University Press}
}

@incollection{totzeck2021trends,
  title={Trends in consensus-based optimization},
  author={Totzeck, Claudia},
  booktitle={Active Particles, Volume 3: Advances in Theory, Models, and Applications},
  pages={201--226},
  year={2021},
  publisher={Springer}
}

@article{khatab2026consensus,
  title={A Consensus-based optimization algorithm using {G}aussian processes for global optimization problems in {S}obolev spaces},
  author={Khatab, Mahmoud and Totzeck, Claudia},
  journal={arXiv preprint arXiv:2603.15337},
  year={2026}
}

@article{fornasier2026consensus,
  title={From Consensus-Based Optimization to Evolution Strategies: {P}roof of Global Convergence},
  author={Fornasier, Massimo and Huang, Hui and Klemenc, Jona and Malaspina, Greta},
  journal={arXiv preprint arXiv:2602.11677},
  year={2026}
}

@article{totzeck2020consensus,
  title={Consensus-based global optimization with personal best},
  author={Totzeck, Claudia and Wolfram, Marie-Therese},
  journal={Mathematical biosciences and engineering: MBE},
  volume={17},
  number={5},
  pages={6026--6044},
  year={2020}
}

@article{ha2020convergence,
  title={Convergence of a first-order consensus-based global optimization algorithm},
  author={Ha, Seung-Yeal and Jin, Shi and Kim, Doheon},
  journal={Mathematical Models and Methods in Applied Sciences},
  volume={30},
  number={12},
  pages={2417--2444},
  year={2020},
  publisher={World Scientific}
}

@article{gerber2025uniform,
  title={Uniform-in-time propagation of chaos for Consensus-Based Optimization},
  author={Gerber, Nicolai and Hoffmann, Franca and Kim, Dohyeon and Vaes, Urbain},
  journal={arXiv preprint arXiv:2505.08669},
  year={2025}
}

@article{cipriani2022zero,
  title={Zero-inertia limit: from particle swarm optimization to consensus-based optimization},
  author={Cipriani, Cristina and Huang, Hui and Qiu, Jinniao},
  journal={SIAM J. Math. Anal.},
  volume={54},
  number={3},
  pages={3091--3121},
  year={2022},
  publisher={SIAM}
}

@article{bonandin2025consensus,
  title={Consensus-based algorithms for stochastic optimization problems},
  author={Bonandin, Sabrina and Herty, Michael},
  journal={SIAM Journal on Optimization},
  volume={35},
  number={4},
  pages={2572--2598},
  year={2025},
  publisher={SIAM}
}

@article{garcia2025defending,
  title={Defending against diverse attacks in federated learning through consensus-based bi-level optimization},
  author={Garc{\'\i}a Trillos, Nicol{\'a}s and Kumar Akash, Aditya and Li, Sixu and Riedl, Konstantin and Zhu, Yuhua},
  journal={Philosophical Transactions of the Royal Society A: Mathematical, Physical and Engineering Sciences},
  volume={383},
  number={2298},
  year={2025},
  publisher={The Royal Society}
}

@article{roith2025consensus,
  title={Consensus-based optimization for closed-box adversarial attacks and a connection to evolution strategies},
  author={Roith, Tim and Bungert, Leon and Wacker, Philipp},
  journal={arXiv preprint arXiv:2506.24048},
  year={2025}
}

@article{wei2025consensus,
  title={A consensus-based optimization method for nonsmooth nonconvex programs with approximated gradient descent scheme},
  author={Wei, Jiazhen and Wu, Fan and Bian, Wei},
  journal={Journal of Global Optimization},
  pages={1--32},
  year={2025},
  publisher={Springer}
}

@article{chen2022consensus,
  author = {Chen, Jingrun and Jin, Shi and Lyu, Liyao},
  title = {A Consensus-Based Global Optimization Method with Adaptive Momentum Estimation},
  journal = {Communications in Computational Physics},
  year = {2022},
  volume = {31},
  number = {4},
  pages = {1296-1316},
  doi = {10.4208/cicp.OA-2021-0144},
  url = {https://doi.org/10.4208/cicp.OA-2021-0144}
}

@article{byeon2025consensus,
  title={Consensus, error estimates and applications of first and second-order consensus-based optimization algorithms},
  author={Byeon, Junhyeok and Ha, Seung-Yeal and Hwang, Gyuyoung and Ko, Dongnam and Yoon, Jaeyoung},
  journal={Math. Models Methods Appl. Sci.},
  year={2025},
  publisher={World Scientific}
}

@article{fornasier2024truncated,
  title={Consensus-based optimisation with truncated noise},
  author={Fornasier, Massimo and Richt{\'a}rik, Peter and Riedl, Konstantin and Sun, Lukang},
  journal={European Journal of Applied Mathematics},
  pages={1--24},
  year={2024},
  publisher={Cambridge University Press}
}

@article{fornasier2025pde,
  title={A {PDE} framework of consensus-based optimization for objectives with multiple global minimizers},
  author={Fornasier, Massimo and Sun, Lukang},
  journal={Communications in Partial Differential Equations},
  pages={1--42},
  year={2025},
  publisher={Taylor \& Francis}
}

@article{ko2022convergence,
  title={Convergence analysis of the discrete consensus-based optimization algorithm with random batch interactions and heterogeneous noises},
  author={Ko, Dongnam and Ha, Seung-Yeal and Jin, Shi and Kim, Doheon},
  journal={Math. Models Methods Appl. Sci.},
  volume={32},
  number={06},
  pages={1071--1107},
  year={2022},
  publisher={World Scientific}
}

@article{bungert2025mirrorcbo,
  title={Mirror{CBO}: A consensus-based optimization method in the spirit of mirror descent},
  author={Bungert, Leon and Hoffmann, Franca and Kim, Doh Yeon and Roith, Tim},
  journal={arXiv preprint arXiv:2501.12189},
  year={2025}
}

@article{huang2023global,
  title={On the global convergence of particle swarm optimization methods},
  author={Huang, Hui and Qiu, Jinniao and Riedl, Konstantin},
  journal={Applied Mathematics \& Optimization},
  volume={88},
  number={2},
  pages={30},
  year={2023},
  publisher={Springer}
}

@article{borghi2023constrained,
	title={Constrained consensus-based optimization},
	author={Borghi, Giacomo and Herty, Michael and Pareschi, Lorenzo},
	journal={SIAM Journal on Optimization},
	volume={33},
	number={1},
	pages={211--236},
	year={2023},
	publisher={SIAM}
}

@article{carrillo2023fedcbo,
	title={Fed{CBO}: Reaching group consensus in clustered federated learning through consensus-based optimization},
	author={Carrillo, Jos{\'e} A and Trillos, Nicolas Garcia and Li, Sixu and Zhu, Yuhua},
	journal={Journal of machine learning research},
	volume={25},
	number={214},
	pages={1--51},
	year={2024}
}

@article{chenchene2025consensus,
  title={A consensus-based algorithm for non-convex multiplayer games},
  author={Chenchene, Enis and Huang, Hui and Qiu, Jinniao},
  journal={Journal of Optimization Theory and Applications},
  volume={206},
  number={2},
  pages={45},
  year={2025},
  publisher={Springer}
}

@article{herty2025multiscale,
  title={A multiscale consensus-based algorithm for multilevel optimization},
  author={Herty, Michael and Huang, Yuyang and Kalise, Dante and Kouhkouh, Hicham},
  journal={Mathematical Models and Methods in Applied Sciences},
  pages={1--37},
  year={2025},
  publisher={World Scientific}
}

@article{ha2022stochastic,
	title={Stochastic consensus dynamics for nonconvex optimization on the {S}tiefel manifold: Mean-field limit and convergence},
	author={Ha, Seung-Yeal and Kang, Myeongju and Kim, Dohyun and Kim, Jeongho and Yang, Insoon},
	journal={Math. Models Methods Appl. Sci.},
	volume={32},
	number={03},
	pages={533--617},
	year={2022},
	publisher={World Scientific}
}

@article{carrillo2022consensus,
	title={Consensus-based sampling},
	author={Carrillo, Jos{\'e} A and Hoffmann, Franca and Stuart, Andrew M and Vaes, Urbain},
	journal={Studies in Applied Mathematics},
	volume={148},
	number={3},
	pages={1069--1140},
	year={2022},
	publisher={Wiley Online Library}
}

@inproceedings{borghi2022consensus,
	title={A consensus-based algorithm for multi-objective optimization and its mean-field description},
	author={Borghi, Giacomo and Herty, Michael and Pareschi, Lorenzo},
	booktitle={2022 {IEEE} 61st {C}onference on {D}ecision and {C}ontrol},
	pages={4131--4136},
	year={2022},
	organization={IEEE}
}

@article{bellavia2025discrete,
  title={A discrete consensus-based global optimization method with noisy objective function},
  author={Bellavia, Stefania and Malaspina, Greta},
  journal={Journal of Optimization Theory and Applications},
  volume={206},
  number={1},
  pages={20},
  year={2025},
  publisher={Springer}
}

@article{fornasier2020consensus,
	title={Consensus-based optimization on hypersurfaces: Well-posedness and mean-field limit},
	author={Fornasier, Massimo and Huang, Hui and Pareschi, Lorenzo and S{\"u}nnen, Philippe},
	journal={Math. Models Methods Appl. Sci.},
	volume={30},
	number={14},
	pages={2725--2751},
	year={2020},
	publisher={World Scientific}
}

@article{carrillo2021consensus,
  title={A consensus-based global optimization method for high dimensional machine learning problems},
  author={Carrillo, Jos{\'e} A and Jin, Shi and Li, Lei and Zhu, Yuhua},
  journal={ESAIM Control Optim. Calc. Var.},
  volume={27},
  pages={S5},
  year={2021},
  publisher={EDP Sciences}
}

@article{fornasier2024consensus,
	title={Consensus-based optimization methods converge globally},
	author={Fornasier, Massimo and Klock, Timo and Riedl, Konstantin},
	journal={SIAM Journal on Optimization},
	volume={34},
	number={3},
	pages={2973--3004},
	year={2024},
	publisher={SIAM}
}

@article{fornasier2022anisotropic,
	title={Anisotropic diffusion in consensus-based optimization on the sphere},
	author={Fornasier, Massimo and Huang, Hui and Pareschi, Lorenzo and S{\"u}nnen, Philippe},
	journal={SIAM Journal on Optimization},
	volume={32},
	number={3},
	pages={1984--2012},
	year={2022},
	publisher={SIAM}
}

@article{carrillo2018analytical,
	title={An analytical framework for consensus-based global optimization method},
	author={Carrillo, Jos{\'e} A and Choi, Young-Pil and Totzeck, Claudia and Tse, Oliver},
	journal={Math. Models Methods Appl. Sci.},
	volume={28},
	number={06},
	pages={1037--1066},
	year={2018},
	publisher={World Scientific}
}

@article{pinnau2017consensus,
	title={A consensus-based model for global optimization and its mean-field limit},
	author={Pinnau, Ren{\'e} and Totzeck, Claudia and Tse, Oliver and Martin, Stephan},
	journal={Math. Models Methods Appl. Sci.},
	volume={27},
	number={01},
	pages={183--204},
	year={2017},
	publisher={World Scientific}
}

@article{borghi2024particle,
  title={A particle consensus approach to solving nonconvex-nonconcave min-max problems},
  author={Borghi, Giacomo and Huang, Hui and Qiu, Jinniao},
  journal={SIAM Journal on Control and Optimization},
  volume={64},
  number={3},
  pages={1573--1601},
  year={2026},
  publisher={SIAM}
}

@article{huang2024consensus,
  title={Consensus-based optimization for saddle point problems},
  author={Huang, Hui and Qiu, Jinniao and Riedl, Konstantin},
  journal={SIAM Journal on Control and Optimization},
  volume={62},
  number={2},
  pages={1093--1121},
  year={2024},
  publisher={SIAM}
}

@article{huang2024self,
  title={Self-interacting {CBO}: {E}xistence, uniqueness, and long-time convergence},
  author={Huang, Hui and Kouhkouh, Hicham},
  journal={Applied Mathematics Letters},
  pages={109372},
  year={2024},
  publisher={Elsevier}
}

@article{choi2025modified,
  title={A modified Consensus-Based Optimization model: consensus formation and uniform-in-time propagation of chaos},
  author={Choi, Young-Pil and Lee, Seungchan and Song, Sihyun},
  journal={arXiv preprint arXiv:2511.19116},
  year={2025}
}

@article{huang2025faithful,
  title={Faithful global convergence for the rescaled consensus-based optimization},
  author={Huang, Hui and Kouhkouh, Hicham and Sun, Lukang},
  journal={(To appear in SIAM J. Optim.) arXiv:2503.08578},
  year={2025}
}

@article{huang2025uniform,
  title={Uniform-in-time mean-field limit estimate for the {C}onsensus-{B}ased {O}ptimization},
  author={Huang, Hui and Kouhkouh, Hicham},
  journal={ESAIM: Control, Optimisation and Calculus of Variations},
  volume={31},
  pages={69},
  year={2025},
  publisher={EDP Sciences}
}

@article{huang2024fast,
  title={Fast and robust consensus-based optimization via optimal feedback control},
  author={Huang, Yuyang and Herty, Michael and Kalise, Dante and Kantas, Nikolas},
  journal={arXiv preprint arXiv:2411.03051},
  year={2024}
}

@article{bonandin2026exploiting,
  title={Exploiting Structure with Anisotropic Consensus-Based Optimization},
  author={Bonandin, Sabrina and Riedl, Konstantin and Veneruso, Sara},
  journal={arXiv preprint arXiv:2607.10205},
  year={2026}
}

@article{bonandin2025strong,
  title={Strong global convergence of the consensus-based optimization algorithm},
  author={Bonandin, Sabrina and Riedl, Konstantin and Veneruso, Sara},
  journal={arXiv preprint arXiv:2512.10654},
  year={2025}
}

@article{huang2026derivative,
  title={A derivative-free particle method for optimization in {H}ilbert spaces},
  author={Huang, Hui and Kouhkouh, Hicham},
  journal={arXiv preprint arXiv:2605.31565},
  year={2026}
}

@article{huang2026collective,
  title={Collective Optimization on {R}iemannian Manifolds with Bounded Curvature},
  author={Huang, Hui and Kim, Dohyun and Park, Hansol},
  journal={arXiv preprint arXiv:2606.14884},
  year={2026}
}

@article{bayraktar2026uniform,
  title={Uniform-in-time weak propagation of chaos for consensus-based optimization},
  author={Bayraktar, Erhan and Ekren, Ibrahim and Zhou, Hongyi},
  journal={The Annals of Applied Probability},
  volume={36},
  number={3},
  pages={2387--2426},
  year={2026},
  publisher={Institute of Mathematical Statistics}
}
\end{document}